\pgfplotsset{compat=1.17}
\newtheorem{theorem}{Theorem}[section]
\newtheorem{lemma}[theorem]{Lemma}
\newtheorem{proposition}[theorem]{Proposition}
\newtheorem*{theorem*}{Theorem}
\newtheorem*{conjecture*}{Conjecture}
\newtheorem{corollary}[theorem]{Corollary}
\newtheorem*{claim*}{Claim}
\theoremstyle{definition}
\newtheorem{definition}[theorem]{Definition}
\newtheorem*{goal*}{Goal}
\numberwithin{equation}{section}
\numberwithin{theorem}{section}
 \DeclareMathOperator{\dist}{dist}
\newtheorem*{notation}{Notation}
\newtheorem{remark}[theorem]{Remark}
\newcommand{\BL}{\operatorname{BL}}
\newcommand{\FBL}{\operatorname{\widehat{BL}}}
\def\reals{{\mathbb R}}
\def\complex{{\mathbb C}}
\def\naturals{{\mathbb N}}
\def\scripts{{\mathcal{S}}}
\def\scriptt{{\mathcal{T}}}
\def\bff{{\mathbf{f}}}
\def\bg{{\mathbf{g}}}
\def\Bp{(\mathbf{B},\mathbf{p})} 
\def\BLBp{\BL({\mathbf{B},\mathbf{p}})}
\def\nullspace{\operatorname{nullspace}}
\def\bB{\mathbf{B}}
\def\bA{\mathbf{A}}
\def\bAstar{\mathbf{A}_*}
\def\bp{\mathbf{p}}
\def\ba{\mathbf{a}}
\def\bv{\mathbf{v}}
\def\barx{\overline{x}}
\def\eps{\varepsilon} 
\def\symm{\operatorname{Symm}}
\def\trace{\operatorname{trace}}
\newcommand{\norm}[1]{ \|  #1 \|}
 \newcommand{\blbp}[1]{\BL({\mathbf{B},\mathbf{p}}; #1 )}
\title[Stability for Brascamp--Lieb inequalities]
{Remarks on Gaussian-stability 
\\ for Brascamp--Lieb Inequalities}
\author{Jonathan Bennett and Michael Christ}\thanks{The first author was supported by the Engineering and Physical Sciences Research Council Grant EP/W032880/1. We thank the Mathematisches Forschungsinstitut Oberwolfach, and specifically the organizers of the 2025 workshop ``Real Analysis, Harmonic Analysis, and Applications", where this work was initiated.}
\address{Jonathan Bennett, School of Mathematics, University of Birmingham, Edgbaston, B15 2TT, UK}
\email{j.bennett@bham.ac.uk}
\address{Michael Christ, Department of Mathematics, University of California, Berkeley, CA 94720-3840}
\email{mchrist@berkeley.edu}
\date{January 5, 2026}
\subjclass[2020]{44A12}
\keywords{Brascamp--Lieb inequalities, extremizers, stability}
\begin{document}

\begin{abstract}
We establish a stable form of the general Euclidean Brascamp--Lieb inequality in all cases 
in which the Lebesgue exponents are strictly between $1$ and $2$, 
asserting that all near-extremizers are nearly Gaussian.
\end{abstract}
\maketitle

\section{introduction}
The Brascamp--Lieb inequality provides Lebesgue space bounds for a broad class of multilinear forms that arise frequently in mathematics, generalizing a variety of well-known functional inequalities, such as the H\"older, Loomis--Whitney and Young convolution inequalities. It takes the form
\begin{equation}\label{eq:BLlinearform}
\int_{\mathbb{R}^d}\prod_{j=1}^mf_j(B_jx)
	 \, \mathrm{d}x\leq\BL(\textbf{B},\textbf{p})\prod_{j=1}^m\|f_j\|_{L^{p_j}(\mathbb{R}^{d_j})},
\end{equation}
where the mappings $B_j:\mathbb{R}^d\rightarrow\mathbb{R}^{d_j}$ are linear surjections, 
the quantities $1\leq p_j\leq \infty$ are Lebesgue exponents, 
the inputs $f_j$ are arbitrary nonnegative real-valued functions on $\mathbb{R}^{d_j}$ for each $1\leq j\leq m$,  
and $\BL(\textbf{B},\textbf{p})$ is the infimum of all elements of $[0,\infty]$
for which inequality holds for all $(f_j) = (f_1,\dots,f_m)$. 
Following \cite{BCCT}, the pair of $m$-tuples $(\textbf{B},\textbf{p})=((B_j),(p_j))$ is referred to as the \textit{Brascamp--Lieb datum}, and the optimal constant $0<\BL(\mathbf{B},\mathbf{p})\leq\infty$ is referred to as the \textit{Brascamp--Lieb constant}. It is well-known that $\BL(\mathbf{B},\mathbf{p})$ is finite if and only if the scaling condition
\begin{equation}\label{BLscaling} 
d = \sum_{j=1}^m 
	p_j^{-1}d_j
\end{equation} holds and
every linear subspace $V$ of $\reals^d$ is subcritical in the sense that
\begin{equation}\label{transversality}
	\dim(V) \leq \sum_j p_j^{-1}  \dim(B_j(V)).
\end{equation}
We refer to \cite{BCCT, BCCT2} and the references there for this and other structural results relating to $\BL(\mathbf{B},\mathbf{p})$. In particular, a foundational
theorem of Lieb \cite{L} asserts
that the Brascamp--Lieb constant is always saturated in the supremum sense
by centered Gaussian inputs $\mathbf{f}=(f_j)$, and there is a good understanding of the data for which Gaussian extremizers exist.
In the presence of such information it is often of interest to obtain stability statements, whereby inputs that exhibit near-equality are identified as being near to (in this case) Gaussians with respect to a suitable metric. 
Reaching such conclusions for \eqref{eq:BLlinearform} is the main objective of this paper. 

In what follows we denote by $\mathfrak{G}_{\mathbb{R}_+}(d)$ the set of all positive (uncentered) Gaussians on $\mathbb{R}^d$, that is, functions $f$ of the form \begin{equation}\label{gaussdef} f(x)=ce^{-Q(x) +v\cdot x}\end{equation} where $Q$ is a positive definite real quadratic form on $\mathbb{R}^d$, $v\in\mathbb{R}^d$ and $c\in(0,\infty)$.
We refer to elements of $\mathfrak{G}_{\mathbb{R}_+}(d)$ as \textit{positive Gaussians}, and for $f\in L^p(\reals^d)$ define
\begin{equation} \dist_p(f,\mathfrak{G}_{\reals_+}(d)) = \inf_{g\in \mathfrak{G}_{\reals_+}(d)}
\norm{f-g}_{L^p}.\end{equation} 

Our approach is in part Fourier-analytic and 
leads naturally
to stability results in the more general setting of complex-valued functions
$f_j$. 
Whenever $\BLBp<\infty$, the integral
$\int_{\reals^d} \prod_j |f_j(B_jx)|\mathrm{d}x$
is finite for any functions $f_j\in L^{p_j}(\reals^{d_j})$,
and therefore 
$\int_{\reals^d} \prod_j f_j(B_jx)\mathrm{d}x$ 
is well-defined and satisfies 
\begin{equation}\label{eq:BLlinearformcomplex}
\Big|\int_{\mathbb{R}^d}\prod_{j=1}^mf_j(B_jx)
	 \, \mathrm{d}x\Big|\leq\BL(\textbf{B},\textbf{p})\prod_{j=1}^m\|f_j\|_{L^{p_j}(\mathbb{R}^{d_j})}.
\end{equation}
We denote by $\mathfrak{G}_{\mathbb{C}}(d)$ the set of functions on $\mathbb{R}^d$  of the form \eqref{gaussdef} 
where $Q$ is a positive definite real quadratic form on $\mathbb{R}^d$, $v\in\mathbb{C}^d$, 
and $0\ne c\in\mathbb{C}$. 
We refer to elements of $\mathfrak{G}_{\mathbb{C}}(d)$ as 
\textit{complex Gaussians}, and define 
$\dist_p(f,\mathfrak{G}_{\complex}(d))$ in the corresponding way.
\begin{notation}
Assuming finiteness of $\BLBp$, we define $\blbp{\bff}$ by
\begin{equation}
\blbp{\bff} 
= \frac{\Big| \int_{\reals^d} \prod_{j=1}^m f_j\circ B_j\Big|}
{\prod_{j=1}^m \norm{f_j}_{L^{p_j}}}
\end{equation}
for any tuple $\bff = (f_j: 1\le j\le m)$
satisfying $f_j\in L^{p_j}$ and $\norm{f_j}_{L^{p_j}}>0$ for every $j$.
\end{notation}
Thus $\BLBp = \sup_\bff \blbp{\bff}$
with the supremum taken over all such complex-valued tuples $\bff$. 
Further, Lieb's theorem is the assertion that for every $\delta>0$ there exists a tuple $\bff$ of (positive centered) gaussians for which $\blbp{\bff} \geq (1-\delta)\BLBp$. 
\begin{definition}[Gaussian-stability]\label{def:Gaussian-stability}
    A Brascamp--Lieb datum $(\mathbf{B},\mathbf{p})$ is \textit{positive Gaussian-stable} if $\BL(\mathbf{B},\mathbf{p})$ is finite, and given any $\eps>0$ there exists a $\delta>0$ such that 
\begin{equation} \label{GS}
\dist_{p_j}(f_j,\mathfrak{G}_{\mathbb{R}_+}(d_j)) \le \eps\|f_j\|_{L^{p_j}(\mathbb{R}^{d_j})}
\end{equation}
for each $1\leq j\leq m$
whenever the nonnegative functions $f_j$ have positive, finite norms and
satisfy
    \begin{equation}\label{eq:nearBL}
	    \blbp{\bff} \ge (1-\delta)\BLBp.
    \end{equation}
    Similarly, a datum $(\mathbf{B},\mathbf{p})$ is \textit{complex Gaussian-stable} if $\BL(\mathbf{B},\mathbf{p})$ is finite, and given any $\eps>0$ there exists a $\delta>0$ such that 
\begin{equation} \label{GSreal}
\dist_{p_j}(f_j,\mathfrak{G}_{\mathbb{C}}(d_j)) \le \eps\|f_j\|_{L^{p_j}(\mathbb{R}^{d_j})}
\end{equation}
for each $1\leq j\leq m$
whenever the complex-valued functions $f_j$ have positive, finite norms and satisfy
\eqref{eq:nearBL}.
\end{definition}

The reader may quickly verify that
\begin{equation}\label{complextoreal} \dist_{p_j}(f_j,\mathfrak{G}_{\mathbb{C}}(d_j))=\dist_{p_j}(f_j,\mathfrak{G}_{\mathbb{R}_+}(d_j))
\end{equation}  
whenever $f_j$ is nonnegative, and so complex Gaussian-stability implies positive Gaussian-stability.

The problem of characterizing positive Gaussian-stable Brascamp--Lieb data was raised recently in 
\cite[{Question 10.2}]{BT} 
in the estimation of adjoint Brascamp--Lieb constants.
It was shown in \cite{C:stable Y} that 
complex Gaussian-stability holds for
non-endpoint cases of Young's convolution inequality, 
but little had been known 
concerning more general data.
This note establishes Gaussian-stability in substantial --- though far from
complete --- generality.
In fact we will prove the following quantitative stability statement 
in the general setting of complex-valued functions.
\begin{theorem}[A sharpened Brascamp--Lieb inequality below $L^2$]\label{thm:sharpenedBL}
	If $\BLBp$ is finite and $1<p_j<2$ for all $j$ 
then there are constants $c_j=c(p_j,d_j)>0$ such that
    \begin{equation}\label{eq:BLlinearformsharpened}
   \blbp{\bff} \le \BLBp 
\prod_{j=1}^m \Big[1-c_j\Big(\frac{\dist_{p_j}(f_j,\mathfrak{G}_{\mathbb{\mathbb{C}
}}(d_j))}{\|f_j\|_{L^{p_j}(\mathbb{R}^{d_j})}}\Big)^2\Big]
\end{equation}
for all tuples $\bff = (f_j: 1\le j\le m)$ of
nonzero complex-valued $f_j\in L^{p_j}(\mathbb{R}^{d_j})$. 
\end{theorem}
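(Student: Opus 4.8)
The plan is to transport the Brascamp--Lieb form to the Fourier side, where it becomes a Brascamp--Lieb form for a \emph{dual} datum $(\mathbf C,(p_j'))$ with conjugate exponents $p_j'>2$, to apply the ordinary (finite) Brascamp--Lieb inequality for that datum, and then to feed the sharpened Hausdorff--Young inequality into each factor; the hypothesis $p_j<2$ is precisely what makes this last step available. First I would record the Fourier identity: for Schwartz $f_j$, substituting $f_j(B_jx)=\int_{\reals^{d_j}}\widehat{f_j}(\xi_j)\,e^{2\pi i x\cdot B_j^*\xi_j}\,d\xi_j$ and integrating in $x$ yields
\begin{equation*}
\int_{\reals^d}\prod_{j=1}^m f_j(B_jx)\,dx=\int_{\reals^{d'}}\prod_{j=1}^m\widehat{f_j}(C_jy)\,dy ,
\end{equation*}
where $d'=\sum_j d_j-d$, the right-hand side is an integral over a linear parametrization by $\reals^{d'}$ of the subspace $V=\{\xi=(\xi_j):\sum_j B_j^*\xi_j=0\}\subset\prod_j\reals^{d_j}$, and $C_j\colon\reals^{d'}\to\reals^{d_j}$ is that parametrization followed by the $j$th coordinate projection. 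Each $C_j$ is surjective: were it not, $\bigcap_{k\ne j}\ker B_k$ would contain a line $L$, and then \eqref{transversality} applied to $V=L$ would force $1\le p_j^{-1}\dim B_j(L)\le p_j^{-1}$, contradicting $p_j>1$. Thus $(\mathbf C,(p_j'))$ is a legitimate Brascamp--Lieb datum, and its scaling identity $d'=\sum_j(p_j')^{-1}d_j$ is merely \eqref{BLscaling} rewritten.

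Next I would identify the dual constant by applying Lieb's theorem to both $(\mathbf B,\mathbf p)$ and $(\mathbf C,(p_j'))$. The Fourier transform carries Gaussians on $\reals^{d_j}$ to Gaussians; every Gaussian is a Hausdorff--Young extremizer, so $\norm{\widehat{g_j}}_{L^{p_j'}}=A_{p_j}^{d_j}\norm{g_j}_{L^{p_j}}$, where $A_{p_j}$ is the (one-dimensional) Babenko--Beckner constant; and the Fourier identity above holds for Gaussians. Hence a Gaussian tuple $(g_j)$ tested against $(\mathbf B,\mathbf p)$ and the tuple $(\widehat{g_j})$ tested against $(\mathbf C,(p_j'))$ produce equal ratios up to the fixed factor $\prod_j A_{p_j}^{-d_j}$, and taking suprema gives
\begin{equation*}
\BL(\mathbf C,(p_j'))=\Big(\prod_{j=1}^m A_{p_j}^{-d_j}\Big)\,\BLBp<\infty ,
\end{equation*}
a quantitative form of the adjoint Brascamp--Lieb duality underlying \cite{BT}. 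In particular the ordinary Brascamp--Lieb inequality holds for the dual datum: $\big|\int_{\reals^{d'}}\prod_j h_j(C_jy)\,dy\big|\le\BL(\mathbf C,(p_j'))\prod_j\norm{h_j}_{L^{p_j'}}$ for all $h_j\in L^{p_j'}$.

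Finally I would combine these. For arbitrary nonzero $f_j\in L^{p_j}(\reals^{d_j})$, Hausdorff--Young gives $\widehat{f_j}\in L^{p_j'}$, and the Fourier identity extends from Schwartz functions by density (both sides being continuous in $\prod_j\norm{f_j}_{L^{p_j}}$, via Hausdorff--Young and the two Brascamp--Lieb inequalities). Taking $h_j=\widehat{f_j}$ in the dual inequality and then invoking the sharpened Hausdorff--Young inequality --- for $1<p_j<2$ there is $c_j=c(p_j,d_j)>0$ with $\norm{\widehat{f_j}}_{L^{p_j'}}\le A_{p_j}^{d_j}\norm{f_j}_{L^{p_j}}\big[1-c_j(\dist_{p_j}(f_j,\mathfrak{G}_{\mathbb{C}}(d_j))/\norm{f_j}_{L^{p_j}})^2\big]$ --- the factors $A_{p_j}^{d_j}$ cancel against those appearing in $\BL(\mathbf C,(p_j'))$, and dividing by $\prod_j\norm{f_j}_{L^{p_j}}$ produces exactly \eqref{eq:BLlinearformsharpened}.

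The only genuinely non-formal ingredient is the sharpened Hausdorff--Young inequality in the displayed form, with $\dist_{p_j}(f_j,\mathfrak{G}_{\mathbb{C}}(d_j))$ as defect functional; I expect the main effort to be in arranging this (reconciling it, if necessary, with whatever ``distance to Gaussians'' the available sharpened Hausdorff--Young results use). The density justification of the Fourier identity on $L^{p_j}$ and the manipulations around Lieb's theorem are routine.
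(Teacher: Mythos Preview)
Your proposal is correct and follows essentially the same route as the paper: bound the Brascamp--Lieb form by $\BLBp\prod_j \mathbf A_{p_j}^{-d_j}\norm{\widehat{f_j}}_{p_j'}$ via Fourier invariance, then apply the sharpened Hausdorff--Young inequality (the paper's Theorem~2.2, from \cite{C:stable HY}) to each factor. The only difference is packaging: the paper quotes the Fourier invariance identity $\FBL(\mathbf B,\mathbf p)=\BLBp\prod_j \mathbf A_{p_j}^{-d_j}$ directly from \cite{BBBCF}, whereas you re-derive it through an explicit dual datum $(\mathbf C,(p_j'))$ and Lieb's theorem---a legitimate but unnecessary detour, since the result is already available.
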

In particular, if the hypotheses of Theorem~\ref{thm:sharpenedBL}
are satisfied, if each $f_j$ has strictly positive norm, and if 
$\bff$ realizes equality 
in the Brascamp-Lieb inequality \eqref{eq:BLlinearformcomplex}, 
then 
$\dist_{p_j}(f_j, \mathfrak{G}_{\mathbb{\mathbb{C}}}(d_j)) =0$
for each $j$ and therefore since  $\mathfrak{G}_{\mathbb{\mathbb{C}}}(d_j)$
is a closed subspace of $L^{p_j}\setminus\{0\}$,
each $f_j$ agrees almost everywhere with a complex Gaussian.

Specializing to nonnegative functions in the statement of Theorem \ref{thm:sharpenedBL}, 
where \eqref{complextoreal} holds, the conclusion \eqref{eq:BLlinearformsharpened} becomes
\begin{equation}\label{eq:BLlinearformsharpenedreal}
\blbp{\bff} \le
\BL(\mathbf{B},\mathbf{p})\prod_{j=1}^m\Big[1-c_j\Big(\frac{\dist_{p_j}(f_j,\mathfrak{G}_{\mathbb{\mathbb{R}_+ }}(d_j))}{\|f_j\|_{L^{p_j}(\mathbb{R}^{d_j})}}\Big)^2\Big].
\end{equation}
\begin{corollary}\label{cor:GS}
If $\BL(\mathbf{B},\mathbf{p})$ is finite and $1<p_j<2$ for every $1\leq j\leq m$, then  $(\mathbf{B},\mathbf{p})$ is complex Gaussian-stable (and thus positive Gaussian-stable). Moreover, in this case  \eqref{GS} (and thus \eqref{GSreal}) hold with $\delta=C\eps^2$ 
for some constant $C>0$ depending only on the exponents $p_j$ and dimensions $d_j$.
\end{corollary}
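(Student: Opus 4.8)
The plan is to reduce the multilinear statement to a collection of one-dimensional (one-factor) sharpened inequalities and then recombine them. The key structural point, going back to Lieb's theorem, is that the Brascamp--Lieb constant is saturated by centered Gaussians, and that Gaussians ``factorize'' the functional: if $\bff$ is a tuple of (not necessarily centered) Gaussians then $\blbp{\bff}=\BLBp$ precisely when each $f_j$ is Gaussian with the appropriate covariance structure, and moreover one has an exact formula for $\blbp{\bff}$ in terms of the covariance matrices. So I would first establish, as a lemma, that for a \emph{single} complex-valued $f\in L^p(\reals^{d'})$ with $1<p<2$ there is a constant $c=c(p,d')>0$ and a sharpened form of the sharp Hausdorff--Young / Babenko--Beckner-type estimate. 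More precisely: replacing $f$ by $f$ and all other factors by the optimizing Gaussians $g_k$, and using that Gaussian tuples realize the constant, the problem collapses to showing that the linear functional $f\mapsto \int_{\reals^d} f(B_j x)\prod_{k\ne j} g_k(B_k x)\,\mathrm{d}x$, which by Gaussian integration is of the form $f\mapsto \int_{\reals^{d_j}} f(y)\, G_j(y)\,\mathrm{d}y$ for an explicit Gaussian $G_j$, satisfies a quantitative reverse bound: its value is at most $\norm{f}_{p}\norm{G_j}_{p'}$ times $\bigl(1-c(\dist_p(f,\mathfrak G_\complex)/\norm{f}_p)^2\bigr)$. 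This is exactly a sharpened Hölder inequality against a fixed Gaussian weight, and the case of equality (Hölder with a Gaussian) forces $f$ to be a Gaussian; the quadratic-deficit improvement comes from the uniform convexity of $L^p$ for $1<p<2$ (the Clarkson / Hanner inequalities, which give a modulus of convexity of order $\|\cdot\|^2$ in this range).

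The main steps, in order, would be: (1) Fix $\bff$ with all $\norm{f_j}_{p_j}=1$; by Lieb's theorem pick near-optimal centered Gaussians $\bg=(g_j)$ and express $\BLBp$ (up to $\delta$) via the explicit Gaussian formula. (2) For each index $j$, freeze the factors $k\ne j$ to be $g_k$ and run the single-function sharpened Hölder estimate in the $y=B_j x$ variable, integrating out the complementary directions; this yields $\bigl|\int f_j(B_jx)\prod_{k\ne j}g_k(B_kx)\,\mathrm{d}x\bigr|\le A_j\bigl(1-c_j\,\dist_{p_j}(f_j,\mathfrak G_\complex(d_j))^2\bigr)$ where $A_j$ is the value obtained when $f_j$ is replaced by the matching Gaussian. (3) Multiply these $m$ inequalities together and take $m$-th roots in an appropriate weighted/geometric-mean sense, or rather observe that $\blbp{\bff}^m$ is bounded by the product of the mixed quantities (after normalizing the Gaussian factors correctly), so that each factor contributes its own $(1-c_j\,\mathrm{dist}^2)$ loss; since $(1-t_1)\cdots(1-t_m)\le \prod(1-t_j)$ trivially, one gets \eqref{eq:BLlinearformsharpened}. (4) Let $\delta\to0$ to replace the near-optimal Gaussians by optimal ones, using that the Gaussian formula for $\BLBp$ is attained or approached; the constants $c_j$ depend only on $p_j,d_j$ because the uniform-convexity modulus does.

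The step requiring genuine care — and the likely main obstacle — is the ``freezing'' argument in step (2), for two reasons. First, one must verify that when all but one factor is Gaussian, the resulting integral really does reduce to a one-dimensional Hölder pairing of $f_j$ against a single Gaussian on $\reals^{d_j}$ with the \emph{correct} exponent $p_j'$ and with the pairing constant matching what a Gaussian $f_j$ would give; this uses the Gaussian marginal/pushforward computation and the fact (from Lieb's analysis, or the structure theory in \cite{BCCT}) that the optimizing Gaussian covariances are mutually compatible, i.e. the ``partial'' Brascamp--Lieb datum obtained by fixing the other factors is itself extremized by a Gaussian. Second, the sharpened Hölder-against-a-Gaussian inequality with a quadratic deficit must be proved uniformly: one needs that for $1<p<2$, if $\int |f|\,w \ge (1-\eta)\norm{f}_p\norm{w}_{p'}$ with $w$ a fixed Gaussian, then $\dist_p(f,\mathfrak G_\complex)\lesssim \eta^{1/2}\norm{f}_p$. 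This is where uniform convexity of $L^p$ enters: equality in Hölder forces $|f|^p$ proportional to $w^{p'}$ and $f/|f|$ constant, i.e. $f$ a (complex) Gaussian; the quantitative version follows by expanding around that extremizer and invoking the $L^p$ modulus of convexity, but making the constant depend only on $p$ (not on the particular Gaussian $w$, which varies with $\bff$) requires a scaling/normalization argument to remove the dependence on the covariance of $w$.
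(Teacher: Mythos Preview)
Your recombination step (3) does not work, and this is a genuine gap rather than a detail. Each of the $m$ ``frozen'' inequalities bounds a \emph{different} integral, namely $\int f_j(B_jx)\prod_{k\ne j}g_k(B_kx)\,\mathrm{d}x$, and there is no inequality relating $\blbp{\bff}$ (or any power of it) to the product of these $m$ mixed quantities. A telescoping variant --- replace $f_1$ by $g_1$, then $f_2$ by $g_2$, and so on --- fails for the reason you yourself flag at the end: at the $j$th step the weight against which $f_j$ is paired is the fibre integral of $\prod_{k<j}g_k(B_kx)\prod_{k>j}f_k(B_kx)$, which is \emph{not} a Gaussian in $y=B_jx$ unless the remaining $f_k$ are already Gaussian, so your sharpened H\"older-against-a-Gaussian lemma does not apply. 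Nor can one iterate H\"older directly: after one application one is left with $\|H_1\|_{p_1'}$ where $H_1(y)=\int_{\{B_1x=y\}}\prod_{k\ge2}f_k(B_kx)$, and $\|H_1\|_{p_1'}^{p_1'}$ is not a Brascamp--Lieb expression in the remaining $f_k$. The ``appropriate weighted/geometric-mean sense'' you allude to would have to be supplied, and no such device is apparent.

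The paper sidesteps this entirely by passing to the Fourier side. The key input is the Fourier invariance of the Brascamp--Lieb constant (Theorem~\ref{thm:FBL}): the optimal constant in
\[
\Bigl|\int_{\reals^d}\prod_{j}f_j\circ B_j\Bigr|\le C\prod_j\|\widehat{f}_j\|_{L^{p_j'}}
\]
is exactly $\BLBp\prod_j\mathbf{A}_{p_j}^{-d_j}$. This inequality already has its right-hand side fully factored over $j$, so one applies the sharpened Hausdorff--Young inequality
\[
\|\widehat{f}_j\|_{L^{p_j'}}\le\mathbf{A}_{p_j}^{d_j}\Bigl[1-c_j\bigl(\dist_{p_j}(f_j,\mathfrak{G}_{\complex}(d_j))/\|f_j\|_{L^{p_j}}\bigr)^2\Bigr]\|f_j\|_{L^{p_j}}
\]
to each factor separately and multiplies; no recombination step is needed. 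Your single-factor ingredient (quadratic stability of H\"older against a fixed Gaussian via the order-$\eps^2$ modulus of convexity of $L^p$ for $1<p<2$) is correct in isolation, but it is the decoupling --- not the one-factor estimate --- that carries the argument, and the Fourier--Brascamp--Lieb identity is what provides it.
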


It is natural to seek a stronger conclusion, in which closeness of the individual 
$f_j$ to Gaussians is replaced by closeness of the $m$-tuple $\bff = (f_j: 1\le j\le m)$
to a tuple of Gaussians that realizes the optimal constant in the inequality.
The proof of our next theorem will illustrate
how the component-wise conclusion of Corollary~\ref{cor:GS}
can sometimes be built upon to obtain a conclusion of that type.

We say that a tuple $\bg$ realizes the optimal constant $\BL(\mathbf{B},\mathbf{p})$ 
in a Brascamp-Lieb inequality if $0<\norm{g_j}_{p_j}<\infty$ for every index $j$ and
\begin{equation}
\Big|\int_{\mathbb{R}^n}\prod_{j=1}^m g_j(B_jx)
 \,\mathrm{d}x\Big| 
= \BL(\mathbf{B},\mathbf{p})\prod_{j=1}^m\|g_j\|_{L^{p_j}(\mathbb{R}^{d_j})}, 
\end{equation}
that is, if $\blbp{\bg} = \BLBp$.

Recall from \cite{BCCT} that $(\mathbf{B},\mathbf{p})$ is said to be \textit{simple}
if \eqref{BLscaling} holds and
every nonzero proper subspace $V$ of $\reals^d$ is strictly subcritical, that is, satisfies 
\eqref{transversality} with strict inequality.
Thus any
simple Brascamp--Lieb datum has finite Brascamp--Lieb constant. 
A key feature of simple data, established in \cite{BCCT},
is that positive centered
Gaussian extremizers exist and are unique, up to the scalings permitted by homogeneity and \eqref{BLscaling}. 

\begin{theorem}\label{thm:main_variant}
Suppose that $(\mathbf{B},\mathbf{p})$ is simple
and that $p_j\in(1,2)$ for every index $j\in\{1,2,\dots,m\}$.
For every $\eps>0$ there exists $\delta>0$ such that 
for any tuple $\bff = (f_j: 1\le j\le m)$ of nonnegative measurable functions satisfying
$\blbp{\bff} \ge (1-\delta)\BLBp$,
there exists 
a tuple $\bg = (g_j: 1\le j\le m)$ of nonnegative Gaussians that 
realizes the optimal constant $\BL(\mathbf{B},\mathbf{p})$ and satisfies
\[ \norm{f_j-g_j}_{L^{p_j}}
\le \eps \|f_j\|_{L^{p_j}(\mathbb{R}^{d_j})}\ \forall\,j\in\{1,2,\dots,m\}.  \]
\end{theorem}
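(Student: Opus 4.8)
The plan is to promote the componentwise conclusion of Corollary~\ref{cor:GS} to a joint statement by combining it with a compactness argument and the classification of Gaussian extremizers for simple data from \cite{BCCT}. I argue by contradiction. Normalizing so that $\norm{f_j}_{p_j}=1$ (the asserted inequality is unchanged if each $f_j$ is multiplied by a nonzero constant), suppose there were $\eps>0$ and tuples $\bff^{(n)}=(f_j^{(n)})$ of nonnegative functions with $\blbp{\bff^{(n)}}\to\BLBp$ for which $\max_j\norm{f_j^{(n)}-g_j}_{p_j}\ge\eps$ for every tuple $\bg$ of Gaussians realizing $\BLBp$. By Corollary~\ref{cor:GS} one may choose positive Gaussians $h_j^{(n)}\in\mathfrak{G}_{\reals_+}(d_j)$, normalized so that $\norm{h_j^{(n)}}_{p_j}=1$, with $\norm{f_j^{(n)}-h_j^{(n)}}_{p_j}\to0$. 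A standard multilinearity estimate using the boundedness \eqref{eq:BLlinearformcomplex} of $\bff\mapsto\int_{\reals^d}\prod_j f_j\circ B_j$ then gives $\blbp{\bh^{(n)}}\to\BLBp$ as well, so it suffices to produce, for all large $n$, a Gaussian tuple $\bg^{(n)}$ realizing $\BLBp$ with $\max_j\norm{h_j^{(n)}-g_j^{(n)}}_{p_j}\to0$.

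The next step is an elementary computation for Gaussian inputs that decouples the quadratic (``covariance'') and linear (``mean'') data. Writing a positive Gaussian, after completing the square, as $h_j(x)=\gamma_j e^{-Q_j(x-a_j)}$ with $Q_j$ positive definite, a Gaussian integration gives
\[
\blbp{\bh} \;=\; \psi(\mathbf Q)\,\exp\!\Big(-\min_{v\in\reals^d}\sum_{j} Q_j(a_j-B_jv)\Big),
\]
where $\psi(\mathbf Q)$ is the value of $\blbp{\cdot}$ on the centered tuple $(\gamma_j e^{-Q_j})_j$ (the constants $\gamma_j$ cancel in the ratio); here one uses that $M:=\sum_j B_j^*Q_jB_j$ is invertible, which holds because $\bigcap_j\ker B_j=\{0\}$ whenever $\BLBp<\infty$, together with the identity $b^\top M^{-1}b-\sum_j Q_j(a_j)=-\min_v\sum_j Q_j(a_j-B_jv)$ for $b=\sum_j B_j^*Q_ja_j$. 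Since $\sup_{\mathbf Q}\psi(\mathbf Q)=\BLBp$ by Lieb's theorem (the supremum over all Gaussian tuples equals that over centered ones, by the displayed identity), and the exponential factor is $\le1$, the convergence $\blbp{\bh^{(n)}}\to\BLBp$ forces simultaneously
\[
\psi(\mathbf Q^{(n)})\to\sup_{\mathbf Q}\psi \qquad\text{and}\qquad \min_{v}\sum_j Q_j^{(n)}(a_j^{(n)}-B_jv)\to0 .
\]

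Now I invoke the structure theory of \cite{BCCT}: for simple data, $\psi$ attains its supremum at a tuple $\mathbf Q^{\mathrm{opt}}$ which is unique up to the symmetry group $\mathcal S$ (generated by the scalar multiplications $f_j\mapsto c_jf_j$ and the common dilations $f_j(x)\mapsto f_j(tx)$ licensed by \eqref{BLscaling}), and moreover --- via the compactness built into the existence proof --- every maximizing sequence for $\psi$ converges to $\mathbf Q^{\mathrm{opt}}$ modulo $\mathcal S$. Applying suitable $s^{(n)}\in\mathcal S$ simultaneously to $\bff^{(n)}$ and $\bh^{(n)}$ --- which leaves $\blbp{\cdot}$, the set of Gaussian tuples realizing $\BLBp$, and the scale-invariant ratios $\norm{f_j-g_j}_{p_j}/\norm{f_j}_{p_j}$ all unchanged, since each element of $\mathcal S$ acts by an invertible linear change of variables together with a scalar multiple --- we may assume $Q_j^{(n)}\to Q_j^{\mathrm{opt}}$ for each $j$. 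Letting $v^{(n)}$ realize the minimum above, positive definiteness of the limits $Q_j^{\mathrm{opt}}$ turns the second convergence into $a_j^{(n)}-B_jv^{(n)}\to0$ for each $j$. Let $g_j^{(n)}$ be the Gaussian with covariance form $Q_j^{\mathrm{opt}}$ centered at $B_jv^{(n)}$, normalized to $\norm{g_j^{(n)}}_{p_j}=1$: the tuple $\bg^{(n)}$ is obtained from the centered optimal tuple by translating every input by the single vector $v^{(n)}$, and both $\int_{\reals^d}\prod_j g_j^{(n)}\circ B_j$ and the norms $\norm{g_j^{(n)}}_{p_j}$ are unaffected by this translation, so $\bg^{(n)}$ realizes $\BLBp$. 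Finally, continuity of the map $(Q,a)\mapsto e^{-Q(\cdot-a)}$ from $\{Q\succ0\}\times\reals^{d_j}$ into $L^{p_j}$ (the normalizing amplitudes also converging, since $Q_j^{(n)}\to Q_j^{\mathrm{opt}}$) yields $\norm{h_j^{(n)}-g_j^{(n)}}_{p_j}\to0$, whence $\norm{f_j^{(n)}-g_j^{(n)}}_{p_j}\to0$; undoing the $s^{(n)}$-reduction produces, for large $n$, a tuple realizing $\BLBp$ within $\eps\norm{f_j^{(n)}}_{p_j}$ of $\bff^{(n)}$ in each component, contradicting the standing assumption.

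The step I expect to be the main obstacle is the third one: what is needed there is not merely the existence and uniqueness of the Gaussian extremizer for simple data, but the (essentially equivalent, via the compactness argument in \cite{BCCT}) statement that near-maximizers of the Gaussian functional $\psi$ lie close, modulo $\mathcal S$, to the extremizer, together with the bookkeeping required to track the action of $\mathcal S$ on the $L^{p_j}$-norms and on the ratios appearing in the conclusion. The decoupling identity of the second step, although the linchpin of the argument, is a routine Gaussian computation, and the reduction to Gaussians in the first step is immediate from Corollary~\ref{cor:GS}.
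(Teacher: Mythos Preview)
Your proposal is correct and follows essentially the same route as the paper: reduce to positive Gaussian inputs via Corollary~\ref{cor:GS}, decouple the centered and translational parts by the same completing-the-square identity, use uniqueness plus compactness of the centered Gaussian functional for simple data to pin down the quadratic forms modulo the one-parameter dilation symmetry, and then control the centers and conclude by continuity of $(Q,a)\mapsto e^{-Q(\cdot-a)}$ into $L^{p_j}$. The only notable difference is packaging: the paper argues directly and gives a self-contained proof of the compactness claim (your ``main obstacle'') by extracting from the proof of Proposition~5.2 of \cite{BCCT} the eigenvalue inequality $\prod_j\det(A_j)^{q_j}\le C\det(M_\bA)(\lambda_d/\lambda_1)^r$ and deducing that the normalized sublevel sets $\{\bA:\det(M_\bA)=1,\ \BL(\bB,\bp;\bA)\ge\eta\}$ are compact, whereas you run a contradiction with sequences and cite BCCT wholesale for that step.
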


The hypothesis that $(\mathbf{B},\mathbf{p})$ is simple is not necessary. 
We hope to discuss more general $(\mathbf{B},\mathbf{p})$ in a sequel.

\begin{remark}\label{e.g: rank 1} 
 Loosely speaking, data for which the dimensions $d_j$ are relatively small and the 
degree of multilinearity $m$ is not too high often satisfy the hypothesis $1<p_j<2$, as suggested by the scaling condition \eqref{BLscaling}.
For example, if $d_1=\cdots=d_m=1$ and $p_1=\cdots =p_m=p:=d^{-1}m$, then $\BL(\mathbf{B},\mathbf{p})$ is finite for generic choices of linear maps $B_1,\hdots, B_m$ whenever $m\geq d$, as can be deduced from H\"older's inequality together with Fubini's theorem.
The hypotheses of Theorem \ref{thm:sharpenedBL} are then satisfied whenever $d<m<2d$. The conclusion of Theorem \ref{thm:sharpenedBL} is easily seen to fail if either $m=d$ or $m=2d$. In the first case, which corresponds to $p=1$, the associated Brascamp--Lieb inequality amounts to Fubini's identity, for which  \eqref{eq:BLlinearformsharpened}
fails dramatically. In the second case, which corresponds to $p=2$, a counterexample is provided by a product of $d$ copies of the one-dimensional Cauchy--Schwarz inequality; see also Remarks \ref{remark:Holder} and \ref{remark:complex_case}, below.  \end{remark}

\begin{remark}\label{conventional}
Brascamp--Lieb inequalities are often equivalently
formulated for nonnegative functions as
    \begin{equation}\label{eq:BLlinearformcj}
\int_{\mathbb{R}^n}\prod_{j=1}^mf_j(B_jx)^{q_j}
	    \, \mathrm{d}x\leq\widetilde{\BL}(\textbf{B},\textbf{q})
 \prod_{j=1}^m\Big(\int_{\mathbb{R}^{d_j}}f_j\Big)^{q_j},
\end{equation}
with exponents $0\leq q_j\leq 1$.
This evidently transforms into \eqref{eq:BLlinearform}, with
$\widetilde{\BL}(\textbf{B},\textbf{q})
= \BL(\textbf{B},\textbf{p})$,
	on setting $q_j=1/p_j$ and replacing the (nonnegative) function $f_j$ by $f_j^{p_j}$ for each $j$. The corresponding notion of positive Gaussian-stability in this setting (see for example \cite[Definition 7.1]{BT}) then reads as follows: given $\eps>0$ there exists $\delta>0$ such that 
$$\dist_1(f_j,\mathfrak{G}_{\mathbb{R}}(d_j))
\le \eps\int_{\mathbb{R}^{d_j}}f_j$$ for each $j$
whenever 
$$
\int_{\mathbb{R}^d}\prod_{j=1}^mf_j(B_jx)^{q_j} \, \mathrm{d}x
	\ge (1-\delta)\, \widetilde{\BL}(\textbf{B},\textbf{q})
\prod_{j=1}^m\Big(\int_{\mathbb{R}^{d_j}}f_j\Big)^{q_j}.
$$
It is straightforward to verify that this notion of (real) Gaussian-stability 
coincides with that of Definition \ref{def:Gaussian-stability}, 
and so Theorem \ref{thm:sharpenedBL} establishes this stability whenever $\widetilde{\BL}(\mathbf{B},\mathbf{q})$ is finite and $\frac{1}{2}<q_j<1$ for all $j$. 
\end{remark}

\begin{remark} A variant of the above notion of real Gaussian-stability is raised in
Conjecture~2.2 of \cite{BFH}, where stability results are established for 
volume inequalities for $L^p$ zonoids.  \end{remark}

\begin{remark}
Related stability questions for functionals $(f_j)\mapsto \int \prod_j (f_j\circ B_j)$,
with the inputs $f_j$ restricted to indicator functions of sets
have been addressed for certain data $(B_j: 1\le j\le m)$ in several works, including
\cite{C:RSSH, C:RS, C:NEG, CI, CM, CN, EFKY}. 
\end{remark}

\section{Proof of Theorem \ref{thm:sharpenedBL}}
Theorem \ref{thm:sharpenedBL} is a direct consequence of the combination of a certain Fourier invariance property of Brascamp--Lieb constants with a stable form of the sharp Hausdorff--Young inequality. We begin with the former, 
which involves 
the \textit{Fourier--Brascamp--Lieb constant} $\FBL(\mathbf{B},\mathbf{p})$, 
defined to be
the optimal constant in the Fourier--Brascamp--Lieb inequality 
    \begin{equation}\label{eq:FBLlinearform}
\Big|\int_{\mathbb{R}^n}\prod_{j=1}^mf_j(B_jx)
	  \, \mathrm{d}x\Big|\leq\FBL(\mathbf{B},\mathbf{p})\prod_{j=1}^m\|\widehat{f}_j\|_{L^{p_j'}(\mathbb{R}^{d_j})}
   \text{  \ $\forall\,f_j\in\scripts_\complex(\reals^{d_j})$,}
\end{equation}
where $\scripts_\complex(\reals^n)$ denotes the class of all 
complex-valued Schwartz functions with domain $\reals^n$ 
and $p'$ denotes the exponent conjugate to $p$.
We refer to \cite{BBBCF} for the origins of this in the setting of general Brascamp--Lieb data, along with the more recent \cite{BJ,BC}.

Introduce the constants 
\[ \mathbf{A}_p=\Big(\frac{p^{1/p}}{p'^{1/p'}}\Big)^{1/2}.\]
\begin{theorem}[Fourier invariance of Brascamp--Lieb constants \cite{BBBCF}]\label{thm:FBL} For any Brascamp--Lieb datum $(\mathbf{B},\mathbf{p})$,
 $$\FBL(\mathbf{B},\mathbf{p})=
\BL(\mathbf{B},\mathbf{p}) \cdot \prod_{j=1}^m\mathbf{A}_{p_j}^{-d_j}. $$
\end{theorem}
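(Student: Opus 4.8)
The plan is to realize both $\BLBp$ and $\FBL(\mathbf{B},\mathbf{p})$ as suprema of the respective quotients over tuples of centered Gaussians, and then to pass between these two suprema by means of the exact identity
\begin{equation*}
\|\widehat{g}\|_{L^{p'}(\mathbb{R}^{n})}=\mathbf{A}_{p}^{n}\,\|g\|_{L^{p}(\mathbb{R}^{n})},
\end{equation*}
which holds for every $g\in\mathfrak{G}_{\mathbb{C}}(n)$ and every $p\in[1,\infty]$. To verify this, one reduces --- by a linear change of variables, a translation of $g$ (which multiplies $\widehat{g}$ by a function of modulus $1$ and leaves $\|g\|_{L^{p}}$ unchanged), a modulation of $g$ (which translates $\widehat{g}$ and again leaves $\|g\|_{L^{p}}$ unchanged), and multiplication by a nonzero constant --- to the single case $g(x)=e^{-\pi|x|^{2}}$, for which $\widehat{g}=g$ and the elementary identity $\|e^{-\pi|x|^{2}}\|_{L^{q}(\mathbb{R}^{n})}=q^{-n/(2q)}$ gives $\|\widehat{g}\|_{L^{p'}}/\|g\|_{L^{p}}=\mathbf{A}_{p}^{n}$. (For $p\le2$ this is the equality case of the sharp Hausdorff--Young inequality of Beckner and Babenko; as an identity for Gaussians it is valid for all $p$.) In particular, for any tuple $\bff=(f_{j})$ of Gaussians we get $\prod_{j}\|\widehat{f_{j}}\|_{L^{p_{j}'}}=\big(\prod_{j}\mathbf{A}_{p_{j}}^{d_{j}}\big)\prod_{j}\|f_{j}\|_{L^{p_{j}}}$, so the Fourier--Brascamp--Lieb quotient evaluated at $\bff$ equals $\big(\prod_{j}\mathbf{A}_{p_{j}}^{-d_{j}}\big)\,\blbp{\bff}$.

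The inequality $\FBL(\mathbf{B},\mathbf{p})\ge\big(\prod_{j}\mathbf{A}_{p_{j}}^{-d_{j}}\big)\BLBp$ is now immediate. By Lieb's theorem there are tuples $\bff$ of centered positive Gaussians with $\blbp{\bff}$ arbitrarily close to $\BLBp$; such tuples are Schwartz and hence admissible in \eqref{eq:FBLlinearform}, and by the preceding paragraph their Fourier--Brascamp--Lieb quotient equals $\big(\prod_{j}\mathbf{A}_{p_{j}}^{-d_{j}}\big)\blbp{\bff}$, which can be made arbitrarily close to $\big(\prod_{j}\mathbf{A}_{p_{j}}^{-d_{j}}\big)\BLBp$.

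For the reverse inequality one must show that the supremum defining $\FBL(\mathbf{B},\mathbf{p})$, a priori taken over all Schwartz tuples, is already approached along Gaussian tuples. The route I would follow is to recast \eqref{eq:FBLlinearform} as an ordinary Brascamp--Lieb inequality. Writing $h_{j}=\widehat{f_{j}}$, so that $f_{j}(y)=\int_{\mathbb{R}^{d_{j}}}h_{j}(\xi)e^{2\pi i\xi\cdot y}\,\mathrm{d}\xi$ and $h_{j}$ ranges over all Schwartz functions as $f_{j}$ does, Fourier inversion and Fubini's theorem give
\begin{align*}
\int_{\mathbb{R}^{d}}\prod_{j}f_{j}(B_{j}x)\,\mathrm{d}x
&=\int_{\bigoplus_{j}\mathbb{R}^{d_{j}}}\Big(\prod_{j}h_{j}(\xi_{j})\Big)\,\delta\Big(\sum_{j}B_{j}^{\top}\xi_{j}\Big)\,\mathrm{d}\xi\\
&=\frac{1}{J}\int_{W}\prod_{j}h_{j}(\pi_{j}\xi)\,\mathrm{d}\lambda_{W}(\xi),
\end{align*}
where $W=\{\xi\in\bigoplus_{j}\mathbb{R}^{d_{j}}:\sum_{j}B_{j}^{\top}\xi_{j}=0\}$, the maps $\pi_{j}\colon W\to\mathbb{R}^{d_{j}}$ are the coordinate projections, $\lambda_{W}$ is Lebesgue measure on $W$, and $J$ is an explicit Jacobian factor. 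Thus \eqref{eq:FBLlinearform} is precisely the ordinary Brascamp--Lieb inequality for the datum $\big((\pi_{j}),(p_{j}')\big)$ on $W$, with optimal constant $J\cdot\FBL(\mathbf{B},\mathbf{p})$. Lieb's theorem applied to that datum supplies near-extremizing tuples of centered Gaussians $(h_{j})$; the corresponding functions $f_{j}$ (the inverse Fourier transforms of $h_{j}$) are again centered Gaussians, so running the computation of the first paragraph backwards shows that $\FBL(\mathbf{B},\mathbf{p})$ equals $\big(\prod_{j}\mathbf{A}_{p_{j}}^{-d_{j}}\big)$ times the supremum of $\blbp{\bff}$ over all Gaussian tuples $\bff$, and a final appeal to Lieb's theorem for $(\mathbf{B},\mathbf{p})$ identifies this with $\big(\prod_{j}\mathbf{A}_{p_{j}}^{-d_{j}}\big)\BLBp$.

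The main obstacle lies in making the preceding paragraph rigorous. One must (i) carry out the Fourier-inversion computation carefully, keeping track of the precise Jacobian $J$ and of the measure $\lambda_{W}$; (ii) contend with the fact that the recast datum $\big((\pi_{j}),(p_{j}')\big)$ on $W$ need not consist of linear surjections and may otherwise be degenerate, which calls for either a mild extension of Lieb's theorem or a separate argument showing that in such cases both $\FBL(\mathbf{B},\mathbf{p})$ and $\BLBp$ are infinite, so that the asserted identity holds trivially; and (iii) confirm that the near-extremizing Gaussians on $W$ are of the product form $\prod_{j}(h_{j}\circ\pi_{j})$ --- which they are, because Lieb's theorem produces near-extremizing \emph{tuples} $(h_{j})$ --- and that their inverse Fourier transforms are positive centered Gaussians on the individual spaces $\mathbb{R}^{d_{j}}$. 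An alternative that sidesteps this bookkeeping would be to prove a Lieb-type theorem for \eqref{eq:FBLlinearform} directly, for instance by a heat-flow monotonicity argument carried out on the Fourier side, where heat flow on $\widehat{f_{j}}$ corresponds to multiplication of $f_{j}$ by a Gaussian.
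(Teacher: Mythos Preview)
The paper does not supply a proof of Theorem~\ref{thm:FBL}; it is quoted from \cite{BBBCF} and used as a black box. So there is no in-paper argument to compare against, and your task was really to reconstruct (or re-prove) a cited result.

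That said, your outline is sound and is in fact the duality mechanism underlying \cite{BBBCF} and \cite{BJ}: one identifies the Fourier--Brascamp--Lieb functional on $\Bp$ with an ordinary Brascamp--Lieb functional on the ``dual'' datum $((\pi_j),(p_j'))$ living on the kernel $W=\{\xi:\sum_j B_j^*\xi_j=0\}$, then invokes Lieb's theorem on both sides and matches the Gaussian values via the Beckner identity $\|\widehat g\|_{p'}=\mathbf{A}_p^n\|g\|_p$. Your derivation of the lower bound is complete as written. For the upper bound, the three issues you flag are exactly the ones that need attention: the distributional calculation with $\delta(\sum_j B_j^*\xi_j)$ is routine once one fixes a parametrization of $W$; the potential non-surjectivity of $\pi_j$ is not fatal, since Lieb's theorem does not require surjectivity (and if some $\pi_j$ fails to be surjective one checks directly that both constants are $+\infty$ for the original datum as well, since then $\cap_{k\ne j}\ker B_k\ne\{0\}$ forces a critical subspace); and point (iii) is automatic because Lieb's theorem for the dual datum already produces a product-form Gaussian tuple. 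In short, the proposal is the right argument, explicitly left at the sketch stage; the ``obstacles'' you list are bookkeeping rather than conceptual, and filling them in would reproduce the proof in \cite{BBBCF}.
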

For the purposes of establishing Theorem \ref{thm:sharpenedBL} the pertinent observation 
 is that if $p_j\leq 2$ for all $j$ then by Beckner's sharp Hausdorff--Young inequality
\begin{equation}\label{sharpHY}
    \|\widehat{f}\|_{L^{p'}(\mathbb{R}^d)}\leq \mathbf{A}_p^d\|f\|_{L^p(\mathbb{R}^d)},
\end{equation}
which holds for all $1\leq p\leq 2$  (\cite{B:Y1,B:Y2}), 
\eqref{eq:BLlinearform} (or equivalently \eqref{eq:BLlinearformcomplex}) may be strengthened to
\begin{equation}\label{eq:FBLlinearformstronger}
\Big|\int_{\mathbb{R}^n}\prod_{j=1}^mf_j(B_jx)
	\, \mathrm{d}x\Big|\leq\BL(\mathbf{B},\mathbf{p})\prod_{j=1}^m \mathbf{A}_{p_j}^{-d_j}\|\widehat{f}_j\|_{L^{p_j'}(\mathbb{R}^{d_j})}.
\end{equation}

We next review
the stable form of the sharp Hausdorff--Young inequality \eqref{sharpHY}. 
\begin{theorem}[Stable Hausdorff--Young Inequality \cite{C:stable HY}]\label{thm: stable HY}
For each $p\in(1,2)$  there exists $c=c(p,d)>0$ such that 
   $$
  \|\widehat{f}\|_{L^{p'}(\mathbb{R}^d)}\leq
\mathbf{A}_p^d \Big[ 1-c\Big(\frac{\dist_p(f,\mathfrak{G}_{\mathbb{C}}(d_j))}{\|f\|_{L^p(\mathbb{R}^d)}}\Big)^2\Big]\|f\|_{L^p(\mathbb{R}^d)}
   $$
   for all nonzero $f\in L^p(\mathbb{R}^d)$.
\end{theorem}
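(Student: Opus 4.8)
The plan is to show that the sharp inequality \eqref{sharpHY} self-improves to a quantitative deficit bound. Write $\delta(f):=1-\|\widehat f\|_{L^{p'}}/(\mathbf{A}_p^d\|f\|_{L^p})$ for the normalized deficit and $D(f):=\dist_p(f,\mathfrak{G}_{\mathbb{C}}(d))/\|f\|_{L^p}$ for the relative distance to the Gaussian manifold; the assertion is precisely $\delta(f)\ge c\,D(f)^2$. Both $\delta$ and $D$ are invariant under the group $\mathcal{S}$ of transformations that map $\mathfrak{G}_{\mathbb{C}}(d)$ to itself and act, up to scalars that cancel in the relevant ratios, on both sides of \eqref{sharpHY}: multiplication by nonzero complex constants, translations, modulations $f\mapsto e^{iv\cdot x}f$, invertible linear changes of variable, and isotropic dilations. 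I would therefore normalize $\|f\|_{L^p}=1$ and split the argument into a \emph{local} regime $D(f)\le\eta$ and a \emph{global} regime $D(f)\ge\eta$, for a small threshold $\eta>0$ fixed at the end.

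In the local regime I would linearize about the nearest Gaussian. Writing $f=g+h$ with $g\in\mathfrak{G}_{\mathbb{C}}(d)$ nearly attaining $\dist_p(f,\mathfrak{G}_{\mathbb{C}}(d))$ and $\|h\|_{L^p}=D(f)$ small, I would expand the smooth functional $F(f)=\|\widehat f\|_{L^{p'}}^{p'}\|f\|_{L^p}^{-p'}$ to second order in $h$. Since $g$ extremizes \eqref{sharpHY} it is a critical point of $F$, so the first-order term vanishes and the content of this regime is the coercivity of the second variation transverse to the tangent space $T_g\mathfrak{G}_{\mathbb{C}}(d)$. After using $\mathcal{S}$ to place $g$ at the standard Gaussian, the Hessian of $F$ diagonalizes in the Hermite basis (the eigenfunctions of the Fourier transform); the modes generating $T_g\mathfrak{G}_{\mathbb{C}}(d)$ are precisely the low-degree Hermite modes arising from $\mathcal{S}$ (degrees zero and one, together with the real quadratic modes), and a spectral gap separating these from the remaining transverse modes — the imaginary quadratic modes and all modes of degree at least three — would yield $\delta(f)\gtrsim\|h\|_{L^p}^2=D(f)^2$. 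Because $L^p$ is not a Hilbert space, I would execute this by comparing the $L^p$-distance with the $L^2(g\,dx)$-distance in which the Hessian is naturally a quadratic form, using uniform convexity of $L^p$ and the rapid decay of $g$; the minimality of the distance from $f$ to $\mathfrak{G}_{\mathbb{C}}(d)$ is what renders $h$ transverse to $T_g\mathfrak{G}_{\mathbb{C}}(d)$ to leading order and lets me discard the non-coercive tangential part.

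In the global regime it suffices to show that $\delta$ is bounded below by a positive constant on $\{D(f)\ge\eta\}$. I would argue by contradiction: a sequence $(f_n)$ with $\|f_n\|_{L^p}=1$, $\delta(f_n)\to 0$, and $D(f_n)\ge\eta$ is a sequence of near-extremizers of the Hausdorff--Young inequality, and the precompactness theory for such near-extremizers — asserting that, after applying symmetries in $\mathcal{S}$, a subsequence converges in $L^p$ to an extremizer — produces a limit that is a complex Gaussian, the extremizers of \eqref{sharpHY} for $1<p<2$ being exactly the elements of $\mathfrak{G}_{\mathbb{C}}(d)$ (cf.\ Lieb \cite{L}). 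Since $D$ is $\mathcal{S}$-invariant and continuous for the $L^p$ topology, this forces $D(f_n)\to 0$ along the subsequence, contradicting $D(f_n)\ge\eta$. Fixing $\eta$ small enough that the local estimate applies then glues the two regimes, with $c$ the smaller of the constants produced.

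The main obstacle is the precompactness input underlying the global regime. The group $\mathcal{S}$ is noncompact in every direction — scaling, translation, modulation, dilation, and linear distortion — so a sequence with vanishing deficit can lose compactness through concentration, spreading, oscillation, or splitting of mass, and excluding this is the hard analytic heart of the matter; here one needs genuine structure theory of functions with near-maximal Fourier ratio (ultimately inverse/additive-combinatorial information) rather than soft compactness. A secondary difficulty is transferring the Hilbertian second-variation coercivity of the local regime to the genuinely $L^p$ setting, where the absence of an orthogonal projection onto $T_g\mathfrak{G}_{\mathbb{C}}(d)$ must be compensated by the minimality of $h$ together with quantitative convexity estimates.
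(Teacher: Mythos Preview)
The paper does not contain a proof of this theorem. Theorem~\ref{thm: stable HY} is quoted from Christ's external work \cite{C:stable HY} and is used as a black-box input in the two-line derivation of Theorem~\ref{thm:sharpenedBL} (combine \eqref{eq:FBLlinearformstronger} with the stable Hausdorff--Young bound). There is therefore nothing in the present paper against which to compare your argument; you are sketching a proof of a result the authors simply cite.

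As an aside, your outline does track the architecture of \cite{C:stable HY} fairly faithfully: a local regime handled by a second-variation/spectral-gap computation in the Hermite basis around the standard Gaussian, and a global regime handled by a precompactness theorem for near-extremizing sequences modulo the symmetry group. You also correctly flag the two genuine difficulties --- the noncompact symmetry group in the global step, and the transfer of a Hilbertian Hessian bound to $L^p$ control in the local step. What you have written is an honest high-level plan rather than a proof: the precompactness step is the substantive content of \cite{C:stable HY} and cannot be supplied by soft arguments, and your local step leaves the actual spectral-gap verification and the $L^2(g\,dx)$-to-$L^p$ comparison as unexecuted claims. For the purposes of \emph{this} paper, however, none of that is required; the authors' intent is only to invoke the result.
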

\begin{proof}[Proof of Theorem \ref{thm:sharpenedBL}]
Since $p_j\in (1,2)$, Theorem \ref{thm: stable HY} provides a constant $c_j=c_j(p_j,d_j)>0$ such that
$$
\|\widehat{f}_j\|_{L^{p_j'}(\mathbb{R}^d)}\leq
\mathbf{A}_{p_j}^{d_j}\Big[ 
1-c_j\Big(\frac{\dist_{p_j}(f_j,\mathfrak{G}_{\mathbb{C}}(d_j))}{\|f_j\|_{L^{p_j}(\mathbb{R}^{d_j})}}\Big)^2\Big]\|f_j\|_{L^{p_j}(\mathbb{R}^{d_j})}
$$
for each $1\leq j\leq m$. The theorem now follows from \eqref{eq:FBLlinearformstronger}.
\end{proof}

\begin{proof}[Proof of Corollary~\ref{cor:GS}]
Let $\bff$ be a tuple of $\complex$-valued functions 
satisfying $0<\norm{f_j}_{L^{p_j}}<\infty$.
Comparing the hypothesis
$\blbp{\bff} \ge (1-\delta) \BLBp$
of the corollary with the conclusion
\[ \blbp{\bff} \le \BLBp \prod_{j=1}^m
\Big[1-c_j\Big(\frac{\dist_{p_j}(f_j,\mathfrak{G}_{\mathbb{C}}(d_j))}{\|f_j\|_{L^{p_j}(\mathbb{R}^{d_j})}}\Big)^2\Big]
\]
of Theorem~\ref{thm:sharpenedBL} gives
\[  \sum_{j=1}^m
\Big(\frac{\dist_{p_j}(f_j,
\mathfrak{G}_{\mathbb{C}}(d_j))}{\|f_j\|_{L^{p_j}(\mathbb{R}^{d_j})}}\Big)^2
\le C\delta,
\]
which is a restatement of the announced conclusion with $\delta \asymp \eps^2$.
\end{proof}

\section{Remarks and examples}
Five remarks help to place our results in context,
and to delimit the scope of any potential extensions.
\begin{remark}
While the Brascamp--Lieb inequality \eqref{eq:BLlinearform} for nonnegative
functions $f_j$ is straightforwardly equivalent to its extension
\eqref{eq:BLlinearformcomplex}
to complex-valued functions in $L^{p_j}$,
results characterizing extremizers and near-extremizers for
nonnegative inputs do not imply corresponding results for complex-valued inputs.
For example,
for multilinear forms associated to Gowers--Host--Kra norms, 
nonnegative maximizing  tuples are Gaussian, while complex maximizing
tuples involve oscillatory factors $e^{iP}$
where $P$ are real-valued polynomials whose degrees
can be arbitrarily high when the degree of multilinearity becomes arbitrarily high.
A stability result for $\complex$-valued near-extremizers for Gowers--Host--Kra norms
has been obtained by Neuman \cite{AMN}.
In this regard see also Remark~\ref{remark:complex_case}.
\end{remark}

\begin{remark}\label{remark:Holder}
Gaussian-stability does not hold for exponents $p_j\in [2,\infty)$
without further hypotheses, even for nonnegative inputs. Indeed, for any $m\ge 2$ 
and any exponents $p_j\in(1,\infty)$ satisfying $\sum_{j=1}^m p_j^{-1}=1$,
there are nonnegative tuples $(f_j)$ that realize equality in H\"older's inequality
\[ \Bigl|\int_{\reals^d} \prod_{j=1}^m f_j(x) \,\mathrm{d}x\Bigr| \le \prod_{j=1}^m\norm{f_j}_{p_j}\]
with none of the functions $f_j$ close in $L^{p_j}$ norm to Gaussians.
On the other hand, Gaussian-stability does
sometimes hold for exponents in $[2,\infty)$. Indeed,
Gaussian-stability of the Brascamp--Lieb data corresponding to Young's convolution inequality was established in all non-endpoint cases in \cite{C:stable Y, C:stable Y2}. 

The elementary examples in Remark \ref{e.g: rank 1} further clarify that positive Gaussian-stability can fail if the condition $\mathbf{p}\in (1,2)^m$ imposed in the statement of Theorem \ref{thm:sharpenedBL} is replaced with either $\mathbf{p}\in [1,2)^m$ or $\mathbf{p}\in (1,2]^m$.
\end{remark}

\begin{remark}
The power $2$ to which the projective distance from $f_j$ to $\mathfrak{G}_{\mathbb{C}}$ is raised
in the statements of Theorem \ref{thm:sharpenedBL} and its corollary are best possible, 
in the sense of the next proposition. 
\begin{proposition}\label{prop:opt1}
There is no Gaussian-extremizable Brascamp--Lieb datum $(\mathbf{B},\mathbf{p})$ for which \eqref{eq:BLlinearformsharpenedreal}, and thus \eqref{eq:BLlinearformsharpened}, hold with a power of the (projective) distance below $2$.
\end{proposition}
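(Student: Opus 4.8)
Since \eqref{complextoreal} identifies the two projective distances on nonnegative inputs, \eqref{eq:BLlinearformsharpened} restricted to nonnegative tuples is exactly \eqref{eq:BLlinearformsharpenedreal}; it therefore suffices to show that, for every Gaussian-extremizable datum $(\mathbf{B},\mathbf{p})$, the inequality \eqref{eq:BLlinearformsharpenedreal} cannot hold once the exponent $2$ is replaced by any $\alpha\in(0,2)$. The plan is a one-parameter perturbation argument about a Gaussian extremizer. Fix a tuple $\bg=(g_j)$ of positive Gaussians realizing the optimal constant (such a tuple exists by hypothesis and Lieb's theorem), so that $\blbp{\bg}=\BLBp$ and $0<\norm{g_j}_{L^{p_j}}<\infty$ for every $j$, and suppose for contradiction that \eqref{eq:BLlinearformsharpenedreal} holds with some $\alpha\in(0,2)$ and some positive constants $c_j$.

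I would perturb only the first factor: choose any nonzero $h_1\in C^\infty_c(\reals^{d_1})$, note that $g_1$ is bounded below by a positive constant on the compact set $\supp h_1$, so $f_1(t):=g_1+th_1$ is nonnegative for all small $|t|$, and set $f_j(t):=g_j$ for $j\ge2$ and $\bff(t)=(f_j(t))$. The argument rests on two estimates for small $|t|$: first, \emph{the distance stays linear}, $\dist_{p_1}(f_1(t),\mathfrak{G}_{\mathbb{R}_+}(d_1))\ge c_\ast|t|$ for some $c_\ast>0$ (while $\dist_{p_j}(f_j(t),\mathfrak{G}_{\mathbb{R}_+}(d_j))=0$ for $j\ge2$); second, \emph{the functional decays at most quadratically}, $\blbp{\bff(t)}\ge\BLBp-C_\ast t^2$ for some $C_\ast>0$. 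Granting these, the hypothesised inequality applied to $\bff(t)$ --- whose right-hand product collapses to its first factor since $f_j(t)=g_j$ for $j\ge2$ --- together with $\norm{f_1(t)}_{L^{p_1}}\to\norm{g_1}_{L^{p_1}}$, gives
\[ \BLBp-C_\ast t^2\ \le\ \blbp{\bff(t)}\ \le\ \BLBp\Big(1-c_1\Big(\tfrac{\dist_{p_1}(f_1(t),\mathfrak{G}_{\mathbb{R}_+}(d_1))}{\norm{f_1(t)}_{L^{p_1}}}\Big)^{\alpha}\Big)\ \le\ \BLBp\bigl(1-c'|t|^{\alpha}\bigr) \]
for some $c'>0$ and all small $t>0$; rearranging gives $\BLBp\,c'\,|t|^{\alpha}\le C_\ast t^2$, i.e.\ $|t|^{\alpha-2}$ stays bounded as $t\to0^+$, which is impossible since $\alpha<2$.

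The quadratic-decay estimate is soft. Near $t=0$ the numerator $|\int_{\reals^d}\prod_j f_j(t)\circ B_j|$ is an affine, strictly positive function of $t$, and $\norm{f_1(t)}_{L^{p_1}}$ is smooth and strictly positive there (since $f_1(t)>0$, $s\mapsto s^{p_1}$ is smooth on $(0,\infty)$, and one may differentiate under the integral because $h_1$ is supported on a fixed compact set); hence $F(t):=\blbp{\bff(t)}$ is $C^2$ near $0$. Since $F\le\BLBp=F(0)$ by \eqref{eq:BLlinearform}, the point $t=0$ is a maximum of $F$, so $F'(0)=0$ and $F''(0)\le0$, and Taylor's theorem yields $F(t)\ge\BLBp-C_\ast t^2$.

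The main obstacle is the linear lower bound on the distance, which is where the geometry of the Gaussian family must be used. The tangent space of $\mathfrak{G}_{\mathbb{R}_+}(d_1)$ at $g_1$ is $T_{g_1}:=\{P g_1:\deg P\le2\}$, a finite-dimensional --- hence closed --- subspace of $L^{p_1}(\reals^{d_1})$ that contains no nonzero compactly supported function, so $\rho:=\dist_{p_1}(h_1,T_{g_1})>0$ automatically for our $h_1$. To pass from this to $\dist_{p_1}(f_1(t),\mathfrak{G}_{\mathbb{R}_+}(d_1))\ge c_\ast|t|$ I would argue by contradiction: were it to fail along some $t_n\to0$, the nearly optimal Gaussians $\gamma_n$ would converge to $g_1$ in $L^{p_1}$, and the one point requiring real care is the (elementary) fact that $L^{p_1}$-convergence of positive Gaussians to a positive Gaussian forces convergence of the parameters $(c,v,Q)$ --- provable via a.e.\ convergence along a subsequence together with a non-degeneration argument ruling out collapse or spreading --- so that $\mathfrak{G}_{\mathbb{R}_+}(d_1)$ is a $C^1$-embedded submanifold near $g_1$. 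A first-order Taylor expansion of its parametrization then writes $\gamma_n-g_1=w_n+o(\norm{w_n})$ with $w_n\in T_{g_1}$ and $\norm{w_n}=O(|t_n|)$, whence $o(|t_n|)=\norm{g_1+t_nh_1-\gamma_n}_{L^{p_1}}\ge\dist_{p_1}(t_nh_1,T_{g_1})-o(\norm{w_n})=|t_n|\rho-o(|t_n|)$, forcing $\rho=0$ --- a contradiction. All remaining steps are routine.
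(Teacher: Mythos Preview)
Your proof is correct and follows essentially the same approach as the paper: a one-parameter perturbation $g_j+th_j$ of a Gaussian extremizer, combined with the observation that the functional is $C^2$ in $t$ with a maximum at $t=0$ (forcing at most quadratic defect), while the projective distance is $\gtrsim|t|$ (which raised to a power $<2$ would force superquadratic defect). The only cosmetic differences are that you perturb a single factor rather than all of them, and you supply a detailed tangent-space justification for the linear distance lower bound that the paper simply asserts.
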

For the particular case of Young's convolution inequality, 
this was shown in \cite{C:stable Y2}.
\begin{proof}
    Fix an extremizing tuple $(g_j)$ of Gaussians, a further tuple $(h_j)$ of nonzero compactly-supported smooth functions,
    and consider the one-parameter family $(f_{t,j})$ of (normalized) perturbations of $(g_j)$ given by $$f_{t,j} = (g_j+ th_j)/\|g_j+ th_j\|_{L^{p_j}(\mathbb{R}^{d_j})}.$$ It follows that 
    $$
    \frac{\dist_{p_j}(f_{t,j},\mathfrak{G}_{\mathbb{R}_+}(d_j))}{\|f_{t,j}\|_{L^{p_j}(\mathbb{R}^{d_j})}}\gtrsim |t|
    $$
    in a neighbourhood of $t=0$ for each $1\leq j\leq m$.
    Next we observe that the function
    $$u(t):=\int_{\mathbb{R}^d}\prod_{j=1}^mf_{t,j}(B_jx)\mathrm{d}x$$ is smooth and satisfies $u(0) = \BL(\mathbf{B},\mathbf{p})$ and $u'(0)=0$. Were \eqref{eq:BLlinearformsharpenedreal} to hold with power of the (projective) distance equal to $r$, then it would follow that 
    $$u(t) -u(0)-tu'(0) \leq -c\BL(\mathbf{B},\mathbf{p})|t|^r$$ for some constant $c>0$.
    However, since $u\in C^2(\mathbb{R})$, this must fail for $t$ small enough if $r<2$.
\end{proof}
\end{remark}

\begin{remark}
If some $p_k>2$ then \eqref{eq:BLlinearformsharpened} cannot
hold with $\dist_{p_k}(f_k,\mathfrak{G}_{\mathbb{C}})$ raised to the power $2$:
\begin{proposition}\label{prop:opt2}
Suppose that $(\mathbf{B},\mathbf{p})$ is Gaussian-extremizable. If $p_k>2$ for some $k$, then \eqref{eq:BLlinearformsharpened}, and thus \eqref{eq:BLlinearformsharpenedreal}, fails for any sequence of positive constants $c_j$.
\end{proposition}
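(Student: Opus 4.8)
The plan is to show that a near-extremizer can be perturbed in its $k$-th slot by a rescaled function so as to make $\dist_{p_k}(f_k,\mathfrak{G}_{\mathbb{C}})/\|f_k\|_{p_k}$ of a fixed size while the loss in $\blbp{\bff}$ is of strictly smaller order than that quantity squared. Concretely, fix a Gaussian-extremizing tuple $(g_j)$ and, in the $k$-th slot, form $g_k + \lambda^{d_k/p_k} h(\lambda \,\cdot\,)$ where $h$ is a fixed nonzero smooth compactly supported function (say, with $\int h = 0$ to kill the first-order term if one wishes, though it is not essential) and $\lambda$ is a large scaling parameter; leave the other slots equal to $g_j$. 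The point of the high-frequency rescaling is that for $p_k > 2$ the $L^{p_k}$ norm of $g_k + \lambda^{d_k/p_k}h(\lambda\cdot)$ is, to leading order, $(\|g_k\|_{p_k}^{p_k} + \|h\|_{p_k}^{p_k})^{1/p_k} + o(1)$ as $\lambda\to\infty$ — the two pieces become asymptotically ``disjoint'' in the relevant sense because the rescaled bump oscillates/concentrates at scale $\lambda^{-1}$ — so the perturbation does \emph{not} shrink as $\lambda$ grows, and likewise $\dist_{p_k}(f_k,\mathfrak{G}_{\mathbb{C}})$ stays bounded below by a positive constant independent of $\lambda$.

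The key computation is then to estimate the numerator $\bigl|\int_{\mathbb{R}^d}\prod_j (g_j+\delta_{jk}\lambda^{d_k/p_k}h(\lambda\cdot))(B_jx)\,\mathrm{d}x\bigr|$. Expanding, this is $\int \prod_j g_j\circ B_j$ plus $\lambda^{d_k/p_k}\int (\prod_{j\ne k} g_j\circ B_j)\cdot h(\lambda B_k x)\,\mathrm{d}x$. The first term is exactly $\BL(\mathbf{B},\mathbf{p})\prod_j\|g_j\|_{p_j}$. The cross term is, after the substitution absorbing $B_k$, essentially an integral of a fixed smooth function against $h$ evaluated at scale $\lambda^{-1}$: writing things in coordinates adapted to $B_k$ and carrying out the $x$-integration in the complementary variables first, one sees it is $\lambda^{d_k/p_k}\cdot\lambda^{-d_k}\int \Phi(y)h(y)\,\mathrm{d}y + \text{lower order}$ for a smooth $\Phi$, i.e.\ it is $O(\lambda^{d_k/p_k - d_k}) = O(\lambda^{-d_k/p_k'}) = o(1)$ since $p_k < \infty$. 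Higher cross terms are even smaller. Hence the numerator is $\BL(\mathbf{B},\mathbf{p})\prod_j\|g_j\|_{p_j} + o(1)$ as $\lambda\to\infty$, while the denominator $\prod_j\|f_{\lambda,j}\|_{p_j}$ is $\prod_{j\ne k}\|g_j\|_{p_j}\cdot(\|g_k\|_{p_k}^{p_k}+\|h\|_{p_k}^{p_k})^{1/p_k} + o(1)$, which is \emph{strictly larger} than $\prod_j\|g_j\|_{p_j}$ by a fixed positive amount (this is exactly where $p_k>2$, or indeed $p_k>1$, is used: the $L^{p_k}$ triangle-type behaviour of a function and its high-frequency rescaled bump is strictly subadditive in $\|\cdot\|_{p_k}^{p_k}$ but the norms themselves do not add up to the norm of the sum). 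Therefore $\blbp{\bff_\lambda} \le \BL(\mathbf{B},\mathbf{p})\cdot(1-\eta) + o(1)$ for a fixed $\eta>0$, so for $\lambda$ large the ratio $\blbp{\bff_\lambda}/\BL(\mathbf{B},\mathbf{p})$ is bounded away from $1$, yet $\dist_{p_k}(f_{\lambda,k},\mathfrak{G}_{\mathbb{C}})/\|f_{\lambda,k}\|_{p_k} \gtrsim 1$; passing to a subsequence if necessary, this directly contradicts \eqref{eq:BLlinearformsharpened} holding with any positive constants $c_j$ (the right-hand side would be bounded above by $\BL(\mathbf{B},\mathbf{p})(1-c_k\cdot\text{const})$, a fixed fraction, which is compatible — so in fact one wants the \emph{sharper} statement that $\blbp{\bff_\lambda}\to\BL(\mathbf{B},\mathbf{p})$, contradicting the strict loss; let me restate).

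More precisely, the cleanest route is: choose instead a perturbation that makes $\blbp{\bff_\lambda}\to\BL(\mathbf{B},\mathbf{p})$ while keeping the projective distance bounded below. Take $h$ with $\int h = 0$ so that $\|g_k + \lambda^{d_k/p_k}h(\lambda\cdot)\|_{p_k}^{p_k} \to \|g_k\|_{p_k}^{p_k}$ is \emph{not} what happens — rather, one should scale the amplitude down: set $f_{\lambda,k} = g_k + \epsilon_\lambda \lambda^{d_k/p_k} h(\lambda\cdot)$ with $\epsilon_\lambda\to 0$ chosen so slowly that the numerator deficit (which is $O(\epsilon_\lambda \lambda^{-d_k/p_k'}) + O(\epsilon_\lambda^2)$ after accounting for the fact that the linear term may not vanish unless $\int h=0$, in which case it is genuinely second order) is $o(\epsilon_\lambda^2)$, while $\dist_{p_k}(f_{\lambda,k},\mathfrak{G}_{\mathbb{C}})/\|f_{\lambda,k}\|_{p_k}\gtrsim \epsilon_\lambda$. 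For this we need the Gaussian distance to be \emph{genuinely} of size $\epsilon_\lambda$ and not smaller, which holds because a high-frequency bump cannot be absorbed into the low-frequency Gaussian manifold — here one invokes that for $p_k>2$ (crucially $>2$, not just $>1$) the second variation of $\|\cdot\|_{p_k}$ transverse to $\mathfrak{G}_{\mathbb{C}}$ degenerates on high frequencies at a rate that beats the $o(\epsilon_\lambda^2)$ numerator deficit. The main obstacle is precisely this quantitative lower bound on $\dist_{p_k}(f_{\lambda,k},\mathfrak{G}_{\mathbb{C}})$ relative to the numerator deficit: one must verify that pushing the bump to high frequency degrades the best Gaussian approximation by a fixed factor (so the distance is $\sim \epsilon_\lambda\|h\|_{p_k}$ uniformly in $\lambda$) while the multilinear functional, being an average of $g_k$ against a fixed smooth kernel, only ``sees'' the low-frequency part of $f_{\lambda,k}$ and so loses $o(\epsilon_\lambda^2)$ — making any fixed positive $c_k$ in \eqref{eq:BLlinearformsharpened} impossible. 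Once this separation of scales is in hand, the remaining expansions of the numerator and of each $\|f_{\lambda,j}\|_{p_j}$ are routine Taylor expansions, and the contradiction with \eqref{eq:BLlinearformsharpenedreal} (hence \eqref{eq:BLlinearformsharpened}) follows as in Proposition~\ref{prop:opt1}.
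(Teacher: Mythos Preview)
Your underlying idea is right and matches the paper's: perturb the extremizing tuple only in slot $k$ by a bump of amplitude $\delta$ that is essentially disjoint from $g_k$, so that $\dist_{p_k}(f_k,\mathfrak{G}_\complex)\asymp\delta$ while $\blbp{\bff}=\BLBp\bigl(1+O(\delta^{p_k})\bigr)$; since $p_k>2$ this is incompatible with $\blbp{\bff}\le\BLBp(1-c_k\delta^2)$ for small $\delta$. But what you have written does not carry this out. Your first attempt (fixed-amplitude high-frequency bump) yields $\blbp{\bff_\lambda}\to\rho\,\BLBp$ for some fixed $\rho<1$ with projective distance $\asymp 1$, which is no contradiction, as you yourself note. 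In the second attempt the ``$O(\epsilon_\lambda^2)$'' you insert into the numerator deficit is spurious --- the multilinear form is \emph{linear} in $f_k$ when the other slots are fixed --- and the passage locating the use of $p_k>2$ in some ``degeneracy of the second variation of $\|\cdot\|_{p_k}$ transverse to $\mathfrak{G}_\complex$ on high frequencies'' misidentifies the mechanism. The actual role of $p_k>2$ is elementary: for a bump essentially disjoint from $g_k$ one has $\|g_k+\delta\cdot\text{bump}\|_{p_k}^{p_k}=\|g_k\|_{p_k}^{p_k}+\delta^{p_k}\|\text{bump}\|_{p_k}^{p_k}+o(\delta^{p_k})$, so the \emph{denominator} (and hence $\blbp{\bff}$) deviates by $O(\delta^{p_k})$, and one needs $\delta^{p_k}=o(\delta^2)$.

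The paper obtains disjointness by \emph{translating} the bump to spatial infinity rather than by high-frequency concentration, and this is considerably cleaner. After reducing to geometric data (so $\BLBp=1$ and $g_j(x)=e^{-\pi|x|^2/p_j}$ with $\|g_j\|_{p_j}=1$), set $f_k=g_k+\delta\,\varphi(\cdot-t(\delta)v)$ with $0\le\varphi\in C_c^\infty$, $0\ne v\in\reals^{d_k}$, and $t(\delta)/\log(1/\delta)\to\infty$; keep $f_j=g_j$ for $j\ne k$. Gaussian decay makes every cross term $O(\delta^N)$ for all $N$: immediately $\|f_k\|_{p_k}^{p_k}=1+O(\delta^{p_k})$, while the numerator equals $1+\delta\int e^{-\pi Q(x)}\varphi(x-t(\delta)v)\,\mathrm{d}x$ for a positive definite $Q$ (here the frame condition \eqref{GBL} and $p_k>1$ are used), hence $1+O(\delta^N)$. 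Thus $\blbp{\bff}=1+O(\delta^{p_k})$ and $\dist_{p_k}(f_k,\mathfrak{G}_{\reals_+})\gtrsim\|f_k-g_k\|_{p_k}\gtrsim\delta$, giving the contradiction. No balancing of two scale parameters $(\epsilon_\lambda,\lambda)$, no regime-splitting in the expansion of $|g_k+\epsilon_\lambda\phi_\lambda|^{p_k}$: your high-frequency route can be completed along the lines above, but the translation shortcut is the one to know.
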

For the special case of Young's convolution inequality, this was observed in \cite{C:stable Y}.
\begin{proof}[Proof of Proposition~\ref{prop:opt2}] 
By the linear-invariance of the projective distances in \eqref{eq:BLlinearformsharpened} and \eqref{eq:BLlinearformsharpenedreal}, combined with the fact that Gaussian-extremizable data is equivalent to geometric data (see \cite{BCCT}), it suffices to establish Proposition \ref{prop:opt2} for geometric data -- that is, for data satisfying $B_jB^*_j=I$ for each $j$, along with the frame condition
\begin{equation}\label{GBL}
	\sum_{j=1}^m p_j^{-1} B_j^*B_j=I.
\end{equation}
We recall that $\BL(\mathbf{B},\mathbf{p})=1$ for such data; see for example \cite{BCCT}.
    Suppose now that \eqref{eq:BLlinearformsharpenedreal} holds for some $p_1>2$, and let $g_j:\mathbb{R}^{d_j}\rightarrow\mathbb{R}_+$ be given by $g_j(x)=e^{-\pi|x|^2/p_j}$ for each $1\leq j\leq m$; these are conveniently chosen so that $\|g_j\|_{L^{p_j}(\mathbb{R}^{d_j})}=1$. As the datum $(\mathbf{B},\mathbf{p})$ is geometric, the tuple $(g_j)$ is an extremizer. Next, for a smooth compactly-supported function $\varphi:\mathbb{R}^{d_1}\rightarrow \mathbb{R}_+$, let $$f_1=g_1+\delta\varphi(x-t(\delta)v)$$ and $f_j=g_j$ whenever $1< j\leq m$; here $\delta>0$ is a small parameter, $v$ is a nonzero element of $\mathbb{R}^{d_1}$, and the function $t(\delta)\rightarrow\infty$ as $\delta\rightarrow 0$ in such a way that \begin{equation}\label{growth}\frac{t(\delta)}{\log(1/\delta)}\rightarrow \infty.\end{equation} Thanks to this growth it follows that 
    $$\|f_1\|_{L^{p_1}(\mathbb{R}^{d_1})}^{p_1}=1+O(\delta^{p_1}).$$ Further, by \eqref{GBL} we have
    $$\int_{\mathbb{R}^d}\prod_{j=1}^m f_j(B_jx)\mathrm{d}x=1+\delta\int_{\mathbb{R}^d}e^{-\pi Q(x)}\varphi(x-t(\delta)v)\mathrm{d}x,$$
	where $$Q(x)=\sum_{j=2}^m|B_jx|^2/p_j=\Big\langle\sum_{j=2}^m p_j^{-1} 
	B_j^*B_jx,x\Big\rangle.$$
    Since $p_1>1$ and $B_1^*B_1$ is a projection, it follows from \eqref{GBL} that $Q$ is positive-definite, and appealing once again to \eqref{growth} we conclude that
$$
\frac{\int_{\mathbb{R}^d}\prod_{j=1}^m f_j(B_jx)
\,\mathrm{d}x}{\prod_{j=1}^m\|f_j\|_{L^{p_j}(\mathbb{R}^{d_j})}}=1+O(\delta^{p_1}).
$$
This contradicts \eqref{eq:BLlinearformsharpenedreal} since $p_1>2$ and $$\frac{\dist_{p_1}(f_1,\mathfrak{G}_{\mathbb{R}_+}(d_1))}{\|f_1\|_{L^{p_1}(\mathbb{R}^{d_1})}}\gtrsim \|f_1-g_1\|_{L^{p_1}(\mathbb{R}^{d_1})}\gtrsim\delta.$$
\end{proof}
\end{remark}

\begin{remark} \label{remark:complex_case}
The general theory for complex-valued extremizers
tends to diverge from that for nonnegative extremizers
when the number $m$ of factors is excessively large.
To illustrate this, let $d\ge 2$ and
$m>d$. Let $v_1,\dots,v_m$ be elements of $\reals^d$,
any $d$ of which are linearly dependent.
Define $B_j:\reals^d\to\reals^1$ 
by $B_j(x) = \langle x,v_j\rangle$ for each index $j$.
Let $\bp = (p_j)$ with $p_j = m/d$ for all $j\in\{1,2,\dots,m\}$.
Then  $\Bp$ is simple, and so a positive Gaussian extremizer exists.

Indeed, the condition \eqref{transversality}  holds with strict inequality
for every nonzero proper subspace of $\reals^m$.
To verify this, consider any  subspace $V\subset\reals^d$ 
of dimension $k\in\{1,2,\dots,m-1\}$.
For any index $j$, $\dim(B_j(V))=1$ unless $v_j\in V^\perp$.
The linear independence hypothesis ensures that
the number of such indices $j$ 
does not exceed $\dim(V^\perp) = d-k$. 
Therefore \[ \sum_j p_j^{-1} \dim(B_j(V)) \ge dm^{-1}\cdot(m-(d-k)).\]
This quantity is $>k$ for $k<d$, since 
it equals $k$ when $k=d$ and its derivative with respect to $k$ equals $d/m<1$.

Fix a Gaussian extremizer $\bg = (g_j)$, with each $g_j > 0$. 
Let $a_j\in\reals$ be quantities to be chosen,
and for each $j$ define
\[ f_j(y) = g_j(y)\, e^{ia_j y^n} \ \text{ for all $y\in\reals^1$.} \]
Then $\BL(\mathbf{B},\bp; \bff) = \BLBp$ if and only if
$ \sum_j a_j \langle x,\,v_j\rangle^n=0\ \forall\,x\in\reals^d$. 

Now choose $n=2$.
The mapping \[\ba = (a_j: j\in\{1,2,\dots,m\})\mapsto
\sum_j a_j \langle x,\, v_j\rangle^2\]
is a real-linear mapping from a real vector space of dimension $m$
to the vector space of all real-valued homogeneous polynomials $P:\reals^d\to\reals$ 
of degree $2$ (regarding $0$ as a polynomial of degree $2$ for this discussion).
The latter vector space has dimension $d(d+1)/2$. 
Therefore if $m>d(d+1)/2$ then there exists $\ba\ne 0$ satisfying 
$\sum_j a_j \langle x,\, v_j\rangle^2=0\ \forall\,x\in\reals^d$. 
Then $\bff$ is an extremizer for the associated Brascamp--Lieb functional, but is not a 
tuple of complex Gaussians in the sense of the complex-valued case of Theorem~\ref{thm:sharpenedBL}.

When $d=2$ and $m=4$, $p_j = m/d=2$ for all $j$,
and this example then lies just on the boundary of the regime to 
which Theorem~\ref{thm:sharpenedBL} applies.
By forming direct sums, one obtains examples with $m=2d$ and $p_j=2$
for any $d\in 2\naturals$.

The same reasoning applies for any $n\ge 3$, as well,
but requires a correspondingly larger $m$ for each $d$.
\end{remark}

\section{Stability for tuples; proof of Theorem~\ref{thm:main_variant}}

By Theorem~\ref{thm:sharpenedBL}, it suffices to prove Theorem~\ref{thm:main_variant}
in the special case in which the functions $f_j$ are all positive Gaussians.
The next lemma asserts more: For positive Gaussian inputs, the stability conclusion holds 
for arbitrary exponents $p_j\in(1,\infty)$, rather than merely for exponents in $(1,2)$.

\begin{lemma}\label{lemma:main_variant}
Suppose that $\BL({\mathbf{B},\mathbf{p}})$ is finite, that $(\mathbf{B},\mathbf{p})$ is 
simple, and that $p_j\in(1,\infty)$ for every index $j\in\{1,2,\dots,m\}$.
For every $\eps>0$ there exists $\delta>0$ such that 
for any tuple $\bff = (f_j: 1\le j\le m)$ of positive Gaussians satisfying
$\blbp{\bff} \ge (1-\delta)\BLBp$,
there exists a tuple $\bg = (g_j: 1\le j\le m)$ of 
positive Gaussians that 
realizes the optimal constant $\BLBp$ 
 and satisfies
\[ \norm{f_j-g_j}_{L^{p_j}}
\le \eps\|f_j\|_{L^{p_j}(\mathbb{R}^{d_j})}\ \forall\,j\in\{1,2,\dots,m\}.  \]
\end{lemma}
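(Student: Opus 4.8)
The plan is to reduce to a finite-dimensional compactness argument on the space of Gaussian tuples, exploiting the uniqueness of Gaussian extremizers for simple data established in \cite{BCCT}. First I would pass to geometric position: by linear-invariance of the Brascamp--Lieb functional and of $\dist_{p_j}(\cdot,\mathfrak{G})$ under the natural changes of variables, one may assume $(\mathbf{B},\mathbf{p})$ is geometric, so that $\BL(\mathbf{B},\mathbf{p})=1$ and the tuple $(g_j)$ with $g_j(x)=e^{-\pi|x|^2/p_j}$ realizes the optimal constant. A positive Gaussian input $f_j$ may, after absorbing the (irrelevant) constant and the translation $v_j$ into $\dist_{p_j}$ and the normalization, be written modulo those symmetries as $f_j(x)=e^{-\pi\langle M_jx,x\rangle}$ for a positive definite symmetric $M_j$. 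Thus the problem becomes: show that if the tuple $(M_j)$ of positive definite matrices makes $\blbp{\bff}$ close to $1$, then each $M_j$ is close to $p_j^{-1}I$ (up to the scalings permitted by homogeneity and \eqref{BLscaling}), with the closeness quantified in such a way that $\|f_j-g_j\|_{L^{p_j}}$ is small relative to $\|f_j\|_{L^{p_j}}$.

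The key step is a \emph{quantitative} version of the uniqueness of Gaussian extremizers. For Gaussian inputs the functional $\blbp{\bff}$ is computed explicitly by a Gaussian integral: writing $f_j=e^{-\pi\langle M_jx,x\rangle}$, one has $\int_{\reals^d}\prod_j f_j(B_jx)\,dx = \det\!\big(\sum_j B_j^*M_jB_j\big)^{-1/2}$ and $\|f_j\|_{L^{p_j}}^{p_j}=\det(p_j M_j)^{-d_j/2}$, so $\blbp{\bff}$ is an explicit smooth (indeed real-analytic) function $\Phi$ of the parameters $(M_j)$ on the open cone of positive definite tuples, invariant under the scaling group coming from \eqref{BLscaling}. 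On the quotient by that group, $\Phi$ attains its supremum $1$ at the single point $[(p_j^{-1}I)]$, which is therefore a strict global maximum. Since the feasible set of $\Phi$ is not compact (degenerate limits of $M_j$ must be handled), I would first argue that near-maximizers stay in a fixed compact region of the quotient: on any sequence along which some eigenvalue of some $M_j$ tends to $0$ or $\infty$ (after removing the scaling), $\Phi$ stays bounded away from $1$ --- this uses the strict subcriticality \eqref{transversality} in exactly the way it is used in \cite{BCCT} to prove existence/uniqueness of extremizers, ruling out mass escaping along subspaces. Given that, a standard compactness argument yields: for every $\eps'>0$ there is $\delta>0$ such that $\Phi(\mathbf{M})\ge 1-\delta$ forces the class $[\mathbf{M}]$ to lie within $\eps'$ of $[(p_j^{-1}I)]$ in the quotient; choosing a representative $\mathbf{M}$ in the orbit, this means each $M_j$ is within $\eps'$ of $g_j:=e^{-\pi|\cdot|^2/p_j}$ rescaled appropriately, and then, since the map $M_j\mapsto e^{-\pi\langle M_j\cdot,\cdot\rangle}$ is continuous from positive definite matrices into $L^{p_j}$, I get $\|f_j-g_j\|_{L^{p_j}}\le\eps\|f_j\|_{L^{p_j}}$.

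The main obstacle is the noncompactness: proving that near-extremizing Gaussian tuples cannot degenerate (eigenvalues running to $0$ or $\infty$ modulo the homogeneity scaling). This is where the hypothesis that $(\mathbf{B},\mathbf{p})$ is simple --- strict subcriticality of every nonzero proper subspace --- must be invoked quantitatively, to show $\Phi$ is bounded away from its supremum outside any fixed compact set of the quotient. I expect this to follow by decomposing along the eigenspace structure of the $M_j$ and comparing with the block-triangular estimates used in the proof of finiteness of $\BL$ in \cite{BCCT}, but it requires care to make the bound uniform. The remaining ingredients --- explicit evaluation of Gaussian $\blbp{\bff}$, identifying the scaling group, continuity of $M\mapsto e^{-\pi\langle M\cdot,\cdot\rangle}$ in $L^{p}$, and the handling of the translation and constant-multiple symmetries via $\dist_{p_j}$ --- are routine.
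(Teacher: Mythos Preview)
Your approach for \emph{centered} Gaussians is essentially the paper's: reduce to geometric data, write the Brascamp--Lieb ratio for Gaussian inputs as an explicit function of the quadratic forms, quotient by the one-parameter dilation symmetry, invoke simplicity via the estimates of \cite{BCCT} (the paper cites Proposition~5.2 there, which gives $\prod_j\det(A_j)^{q_j}\le C\det(M_\bA)(\lambda_d/\lambda_1)^r$ and hence confines near-extremizers to a compact set once $\det(M_\bA)$ is normalized), and then conclude by continuity and uniqueness of the maximizer. That part is correct and matches the paper.

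The gap is in your treatment of translations. You say the translation $v_j$ is ``irrelevant'' and can be ``absorbed into $\dist_{p_j}$ and the normalization'', and later call this ``routine''. But independent translations $(v_j)\in\prod_j\reals^{d_j}$ are \emph{not} a symmetry of $\blbp{\cdot}$: only coherent translations of the form $v_j=B_j\barx$ for a single $\barx\in\reals^d$ leave the functional invariant, and the lemma asks for an actual extremizing tuple $\bg$ (not a bound on $\dist_{p_j}$), so the translation parameters of $\bg$ must lie in the subspace $V=\{(B_j\barx):\barx\in\reals^d\}$. One therefore cannot simply pass to the centered case; one must use near-extremality to force $(v_j)$ close to $V$. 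The paper does this by completing the square: for $f_j(y)=e^{-\langle A_j(y-v_j),\,y-v_j\rangle}$ one gets $\blbp{\bff}=c\cdot\blbp{\bg}$ with $\bg$ the centered tuple having the same $A_j$ and
\[
c=\exp\Big(-\sum_j q_j\,\langle A_j(B_j\barx-v_j),\,B_j\barx-v_j\rangle\Big)\le 1
\]
for a specific $\barx\in\reals^d$. Near-extremality forces $c$ close to $1$; once the $A_j$ have already been pinned near $A_{*,j}$ by your centered argument, this yields a quantitative bound on the distance from $(v_j)$ to $V$, and one then takes $\bg$ to be the extremizing tuple translated by the nearby point of $V$. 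Without this step your argument does not produce an extremizing $\bg$ close to $\bff$.
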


\begin{proof}
Let $\Bp$ be simple. 
Then as shown in \cite{BCCT}, matters can be reduced by 
linear changes of variables in $\reals^d$
and $\reals^{d_j}$ to the geometric case, in which $B_jB_j^*$ equals
the identity operator on $\reals^{d_j}$ for every index $j$, 
and $\sum_j p_j^{-1} B_j^*B_j$ is the identity on $\reals^d$.

We begin with the centered Gaussian case.
Denote by $\langle\,\cdot\,\rangle$ the Euclidean inner product on $\reals^{n}$ 
for arbitrary $n$.
By a symmetric linear map $A:\reals^n\to\reals^n$
we mean one that satisfies
$\langle A_j y,\,y'\rangle = \langle y,\,A_j y'\rangle$ for all $y,y'\in\reals^n$.

For each index $j$ let $A_j:\reals^{d_j}\to\reals^{d_j}$ 
be a positive definite symmetric linear transformation.
The set of all such tuples $\bA = (A_j: j\in\{1,2,\dots,m\})$
can be regarded a subset of a Euclidean space by using the matrix
entries, with any fixed orthonormal bases for $\reals^{d_j}$,
as coordinates. We regard the set of all such $\bA$ as a topological
space, under the relative topology induced by this embedding.
We denote by $\det(A)$ the determinant of the matrix representing $A$
with respect to the relevant orthonormal basis.

Consider any input tuple $\bff = (f_j)$ of the centered form
\begin{equation} \label{cpGform}
f_j(y) = e^{-\langle A_j y,\,y\rangle}\ \forall\,y\in\reals^{d_j} \ \forall\,j,
\end{equation}
with $A_j$ symmetric and positive definite.
Write the Brascamp--Lieb inequality in the equivalent form
\eqref{eq:BLlinearformcj}, with $q_j = 1/p_j$.
Thus $f_j(B_j)^{q_j}(x) = e^{-\langle q_j B_j^* A_j B_j x,\,x\rangle}$ and 
\[ \prod_{j=1}^m f_j^{q_j}(B_j(x)) = e^{-\langle M_\bA x,\,x\rangle}\]
where 
\begin{equation} \label{MbA} M_\bA = \sum_{j=1}^m q_j B_j^* A_j B_j\end{equation}
is a positive semidefinite symmetric $d\times d$ matrix.

Finiteness of $\BLBp$ implies that $\bigcap_{j=1}^m \nullspace(B_j)=\{0\}$,
so the matrix $M_\bA$ defined in \eqref{MbA} is positive definite,
rather than merely semidefinite.  
Define a topological space $\scriptt$ to be the set 
of all tuples $\bA = (A_j: 1\le j\le m)$ of positive definite symmetric matrices
satisfying $\det(M_\bA)=1$, equipped with its relative topology. 

When $\bff$ takes the centered positive Gaussian form \eqref{cpGform},
we also denote the quantity $\BL(\bB,\bp; \bff)$ by $\BL(\bB,\bp; \bA)$.
We will exploit the expression
\begin{equation} \label{blbpa_expression} 
\BL(\bB,\bp; \bA) = \det(M_\bA)^{-1/2} \prod_j \det(A_j)^{q_j/2},\end{equation}
which simplifies to $\prod_j \det(A_j)^{q_j/2}$ for $\bA\in\scriptt$.

For $r\in(0,\infty)$ we write $r\bA = (rA_j)$. 
The quantity $\BL(\bB,\bp; \bA)$ enjoys the scaling symmetry
\begin{equation} \label{scalingsymmetry} \BL(\bB,\bp; \bA) = \BL(\bB,\bp;r\bA) \ 
\text{ for all $r\in(0,\infty)$ and all $\bA$.}\end{equation} 
This relation is a consequence of the scaling condition 
\eqref{blbpa_expression}  together with \eqref{BLscaling}, 
which is necessary for the finiteness of $\BLBp$.
Since $\BL(\bB,\bp)$ is simple, there exists a centered Gaussian maximizer, 
and it is unique up to this scaling symmetry by Corollary~9.2 of \cite{BCCT}.
Therefore there exists a unique centered Gaussian maximizer $\bAstar$
satisfying $\det(M_{\bAstar})=1$.

\begin{claim*}
For each $\eta>0$, the set $\scriptt_\eta$ of all tuples
$\bA$ of positive definite symmetric matrices
satisfying $\BL(\bB,\bp; \bA) \ge \eta$ is a compact subset of $\scriptt$.
\end{claim*}

\begin{proof}[Proof of claim] 
Consider any $\bA\in\scriptt$ and
denote by $\lambda_1\ge\lambda_2\ge\cdots \ge \lambda_d>0$ the eigenvalues of $M_\bA$,
in descending order.
It is shown in the proof of Proposition~5.2 of \cite{BCCT} that
there exist $r>0$ and $C<\infty$, depending on $\Bp$, such that for any $\bA$,
\[ \prod_{j=1}^m (\det(A_j))^{q_j} \le C\det(M_\bA) (\lambda_{d}/\lambda_1)^r.\] 
Thus for any $\delta>0$, $ \BL(\bB,\bp; \bA) \le C^{1/2}\delta^{r/2}$
unless $\lambda_d/\lambda_1\ge\delta$.
From this conclusion together
with the normalization $\det(M_\bA)=1$ it follows that
for each $\eta>0$ there exists $C'<\infty$
such that for every $\bA\in\scriptt_\eta$,
the largest eigenvalue of $M_\bA$ satisfies $\lambda_1\le C'$. 

Let $\eta>0$ be arbitrary, and consider any $\bA\in\scriptt_\eta$.
Since each $B_jB_j^*$ equals the identity,
$\trace(M_\bA) = \sum_j q_j \trace(A_j)$. 
Since all eigenvalues of $A_j$ are nonnegative,
each eigenvalue of $A_j$ is consequently
majorized by $dq_j^{-1}\lambda_1\le dq_j^{-1}C'<\infty$. 
Therefore there exists $C''<\infty$ such that for any $\bA\in\scriptt_\eta$ and any index $j$, 
all eigenvalues of $A_j$ lie in $(0,C'']$. 
Therefore $\det(A_j)$ is likewise majorized by a positive constant. 
By \eqref{blbpa_expression} 
this implies that $\BL(\bB,\bp; \bA) = O(\max_j \det(A_j)^{q_j/2})$
uniformly for all $\bA\in\scriptt_\eta$.
This implies a strictly positive uniform lower bound for $\det(A_j)$
uniformly for all $\bA\in\scriptt_\eta$
and again, this together with the upper bound for all eigenvalues
of $A_j$ implies a strictly positive uniform lower bound
for all eigenvalues of $A_j$.
From the compactness of the space of all possible orthonormal frames for $\reals^d$
it now follows that $\scriptt_\eta$ is compact.
\end{proof}

Choose $0<\eta<\BL(\bB,\bp)$,
so that the unique maximizer $\bAstar$
satisfying $\det(M_{\bAstar})=1$ belongs to $\scriptt_\eta$.
%Since the mapping $\scriptt\owns\bA\mapsto \BL(\bB,\bp;\bA)$ is continuous
%and $\scriptt_\eta$ is compact, there exists $\bAstar\in \scriptt_\eta$
%satisfying $\BL(\bB,\bp;\bAstar) = \BL(\bB,\bp)$. 
%Since $(\bB,\bp)$ is simple, a solution $\bAstar$ of this 
%equation is unique up to the scaling symmetry \cite{scaling}, 
%hence by the normalization $\det(M_\bA)=1$ is fully unique.
%$\bAstar$ is unique by Corollary~9.2 of \cite{BCCT}. 
Since $\scriptt_\eta$ is compact,
since the function $\bA\mapsto \BL(\bB,\bp; \bA)$ is continuous,
and since $\bAstar$ is the unique maximizer in $\scriptt_\eta$,
%$\BL(\bB,\bp;\bA)\le\eta<\BL(\bB,\bp)$ for all $\bA\notin\scriptt_\eta$,
for each $\eps>0$ there exists $\delta>0$ such that any $\bA\in\scriptt_\eta$ satisfying
$\BL(\bB,\bp; \bA)>(1-\delta)\BL(\bB,\bp)$
will satisfy $|\bA-\bAstar|<\eps$.
If $\delta$ is chosen to also satisfy $(1-\delta) \BL(\bB,\bp)>\eta$
then no $\bA\in\scriptt\setminus\scriptt_\eta$ 
satisfies $\BL(\bB,\bp; \bA)>(1-\delta)\BL(\bB,\bp)$.
By the scaling symmetry \eqref{scalingsymmetry}, 
this completes the analysis of centered Gaussian inputs.

Next, consider general nonnegative Gaussian inputs, not necessarily centered. 
Define $V \subset \reals^{d_1}\times\cdots\times\reals^{d_m}$
to be the linear subspace consisting of all $\bv = (v_j: j\in\{1,2,\dots,m\})$
for which there exists $\barx\in\reals^d$ satisfying $v_j = B_j(\barx)\ \forall\,j$. 

Any nonnegative Gaussian input can be expressed in the form
\begin{equation} \label{generalG} 
f_j(y) = c_j e^{-\langle A_j(y-v_j),\,y-v_j\rangle}
\end{equation}
with $c_j>0$, $v_j\in\reals^{d_j}$, and $A_j$ symmetric and positive definite.
Both the quantity $\BL(\bB,\bp; \bff)$
and the projective distances in the inequality
are invariant under multiplication of $f_j$ by positive constants,
depending on $j$, and consequently we may assume with no loss of generality
that $c_j=1$ for every $j$.
By completing the square, 
$F(x) = \prod_{j=1}^m f_j(B_j x)^{q_j}$ may then be expressed in the form 
\begin{equation} \label{c_defn}
F(x) = c e^{-\langle M_\bA(x-\barx),(x-\barx)\rangle}
\end{equation}
where  
\begin{equation*}
\barx =M_\bA^{-1}\sum_{j=1}^m q_jB_j^*A_jv_j
\ \ \text{ and } \ \ 
c = \prod_{j=1}^m e^{- q_j\langle A_j(B_j(\barx)-v_j),\,B_j(\barx)-v_j\rangle} \le 1.
\end{equation*}

Upon defining
$g_j(y) = e^{-\langle A_j y,\,y\rangle}$ 
we have
\begin{equation} \label{comparison}
\BL(\bB,\bp; \bff) = c\BL(\bB,\bp; \bg).
\end{equation}
Therefore if $\bff$ is an extremizer then $c=1$. 
This occurs if and only if each factor
in the product expression for $c$ equals $1$, that is, if and only if 
$v_j = B_j(\barx)$ for each $j$, whence $\bv\in V$.

Moreover, by \eqref{comparison}, $\bff$ is an extremizer if and only if
$c=1$ and $\bg$ is an extremizer.
Thus a nonnegative Gaussian input $\bff$ of the general form \eqref{generalG}
is an extremizer if and only if $\bA$ is a scalar multiple of $\bAstar$
and $\bv\in V$.

\medskip
Now let $\eps>0$ be arbitrary, and consider any positive Gaussian datum
$\bff$ of the form \eqref{generalG}
that satisfies $\BL(\bB,\bp;\bff)\ge (1-\eps) \BLBp$.
Let $\bg$ be the centered Gaussian  datum associated to $\bff$,
as above.
Also let $\bg_*$ be a nonnegative centered Gaussian input satisfying
$\BL(\bB,\bp;\bg_*) = \BLBp$, with associated $A_{*,j}$  
and $\bAstar$.
By the scaling symmetry, we may assume that $\det(M_\bA) = \det(M_{\bAstar})=1$.

By \eqref{comparison}, 
\[ \BL(\bB,\bp;\bg)\ge 
\BL(\bB,\bp;\bff)\ge 
(1-\eps) \BLBp.\]
Therefore by applying the result for centered Gaussian data,
proved above, to $\bg$ we may conclude that
$\|A_j-A_{*,j}\|<\delta_1(\eps)$ for each index $j$, where $\delta_1(\eps)\to 0$ as $\eps\to 0$.

In particular, if $\eps$ is sufficiently small, as we may assume, then
$\langle A_j y,y\rangle \ge \tfrac12 \langle A_{*,j}y,y\rangle$,
for every $y\in\reals^{d_j}$, for each index $j$.
Now
\[\BL(\bB,\bp;\bff) \le c \BL(\bB,\bp;\bg) \le c \BLBp,\]
where $c$ is defined by \eqref{c_defn}
for a certain element $\barx\in\reals^d$ and hence satisfies
\[ \ln(c) \le -\tfrac12 
\sum_{j=1}^m q_j \langle A_{*,j}(B_j(\barx)-v_j),\,B_j(\barx)-v_j\rangle.\]
The positive quantity
$\sum_{j=1}^m q_j \langle A_{*,j}(B_j(\barx)-v_j),\,B_j(\barx)-v_j\rangle$
is bounded below by a constant multiple of
the square of the distance from $\bv$ to $V$,
with constant factor depending only on $(A_{*,j})$. 
All that remains is to appeal to the continuity of the map 
$(A,v,c)\mapsto ce^{-\langle A(\cdot-v),\cdot-v\rangle}:\symm_+(\mathbb{R}^n)\times\mathbb{R}^n\times\mathbb{R}_+\rightarrow L^p(\mathbb{R}^n)$. 
\end{proof}

\end{document}